\newtheorem{remark}[theorem]{Remark}
\newcommand\bysame{\leavevmode\vrule height 2pt depth -1.6pt width 23pt}
\newcommand\oS{\overline{S}}
\newcommand\LK{\operatorname{L^{Kim}}}
\newcommand\LKs[1]{\operatorname{L^{Kim}_{#1}}}
\newcommand\WF{\operatorname{WF}}
\newcommand\Kim{\operatorname{Kim}}
\newcommand\dist{\operatorname{dist}}
\newcommand\Int{\operatorname{int}}
\newcommand\hn{\widehat{n}}
\newcommand\fone{f^{(1)}}
\newcommand\ftwo{f^{(2)}}
\newcommand\fk[1]{f^{(#1)}}
\newcommand\cI{\mathcal I}
\newcommand\cJ{\mathcal J}
\newcommand\cP{\mathcal P}
\newcommand\dcP{\dot{\mathcal P}}
\newcommand\bzero{\boldsymbol 0}
\newcommand\btwo{\boldsymbol 2}
\newcommand\ty{\widetilde{y}}
\newcommand\tC{\widetilde{C}}
\newcommand\tg{\widetilde g}
\newcommand\tv{\widetilde v}
\newcommand\hv{\widehat v}
\newcommand\tL{\widetilde L}
\newcommand\bb{\boldsymbol b}
\newcommand\cE{\mathcal E}
\newcommand\tSigma{\widetilde \Sigma}
\newcommand\tb{\widetilde{b}}
\newcommand\tV{\widetilde{V}}
\newcommand\tu{\widetilde{u}}
\newcommand\tR{\widetilde{R}}
\newcommand{\la}{\langle}
\newcommand{\ra}{\rangle}
\newcommand\cC{\mathcal{C}}
\newcommand\hi{\hat i}
\newcommand\tf{\tilde{f}}
\newcommand\bbC{\mathbb C}
\newcommand\bbN{\mathbb N}
\newcommand\bbR{\mathbb R}
\newcommand\pa{\partial}
\newcommand\dcC{\dot{\mathcal C}}
\newcommand\restrictedto{\!\!\upharpoonright}
\renewcommand\supp{\operatorname{supp}}
\newcommand\ignore[1]{}
\title{Eigenfunctions and the Dirichlet problem for the\\
Classical Kimura Diffusion Operator}
\author{
  Charles L. Epstein\thanks{Department of Mathematics,
    University of Pennsylvania, 209 South 33rd Street, Philadelphia,
    PA 19104-6395; cle@math.upenn.edu. Research supported in part by
    the NSF under grants DMS12-05851, and  DMS-1507396; and by the ARO under grant
    W911NF-12-1-0552.}
  \and
  Jon Wilkening\thanks{Department of
    Mathematics, 970 Evans Hall, University of California, Berkeley,
    California 94720-3840; wilken@math.berkeley.edu. Research
    supported in part by 
    the US
    Department of Energy, Office of Science, Applied Scientific
    Computing Research, under award number DE-AC02-05CH11231.}
}
\date{January 8, 2016}
\begin{document}

\maketitle

\begin{abstract}
  We study the classical Kimura diffusion operator defined on the $n$-simplex,
  $$\LK=\sum_{1\leq i,j\leq
    n+1}x_i(\delta_{ij}-x_j)\pa_{x_i}\pa_{x_j},$$
  which has important applications in Population Genetics.  Because it
  is a degenerate elliptic operator acting on a singular space,
  special tools are required to analyze and construct solutions to
  elliptic and parabolic problems defined by this operator. The
  natural boundary value problems are the ``regular'' problem and the
  Dirichlet problem. For the regular problem, one can only specify the
  regularity of the solution at the boundary. For the Dirichlet
  problem, one can specify the boundary values, but the solution is
  then not smooth at the boundary.  In this paper we give a
  computationally effective recursive method to construct the
  eigenfunctions of the regular operator in any dimension, and a
  recursive method to use them to solve the inhomogeneous equation. As
  noted, the Dirichlet problem does not have a regular solution. We
  give an explicit construction for the leading singular part along
  the boundary. The necessary correction term can then be found using
  the eigenfunctions of the regular problem.
\end{abstract}
\begin{keywords}Kimura Diffusion, Population Genetics, Degenerate Elliptic
  Equations, Dirichlet Problem, Eigenfunctions
\end{keywords}
\begin{AMS} 35J25, 35J70, 33C50, 65N25 \end{AMS}
\pagestyle{myheadings}
\thispagestyle{plain}
\markboth{C.L.~Epstein and J.~Wilkening}{Eigenfunctions and the Dirichlet Problem
  for  Kimura Diffusion}

\section{Introduction}
An $n$-simplex, $\Sigma_n,$ is the subset of $\bbR^{n+1}$ given, in the \emph{affine
  model}, by the relations:
\begin{equation}
  x_1+\cdots+x_{n+1}=1\text{ with }0\leq x_i\text{ for }1\leq i\leq n+1.
\end{equation}
In population genetics problems, a point
$(x_1,\dots,x_{n+1})\in\Sigma_n$ is often thought of as representing the
frequencies of $n+1$ alleles or types. More generally, $\Sigma_n$ can
be thought of as the space of atomic probability measures with $n+1$
atoms. Mathematically, $\Sigma_n$ is a \emph{manifold with corners}; its
boundary is a stratified space made up of simplices of dimensions between $0$
and $n-1.$

The Kimura diffusion operator, which acts on functions of $n+1$ variables,
\begin{equation}\label{eq:LK:def}
  \LK=\sum_{1\leq i,j\leq
    n+1}x_i(\delta_{ij}-x_j)\pa_{x_i}\pa_{x_j}
\end{equation}
appears in the infinite population limit of the $(n+1)$-allele Wright-Fisher
model. It represents the limit of the random mating term, and actually appears
in the infinite population limits of many Markov chain models in population
genetics, see~\cite{Ewens,Kimura1964,griffiths1979,DerThesis}.
The Kimura diffusion operator has many
remarkable properties, which we employ in our analysis. The properties of this
operator reflect the geometry of the simplex in much the same way as the
standard Laplace operator reflects the Euclidean geometry of $\bbR^n.$

To include the effects of mutation, selection, migration, etc. the operator is
modified by the addition of a vector field
\begin{equation}
  V=\sum_{j=1}^{n+1}b_j(x)\pa_{x_j},
\end{equation}
which is tangent to $\Sigma_n,$ and inward pointing along the boundary of the
simplex. In applications to population genetics, the coefficient functions
$\{b_j(x)\}$ are typically polynomials. Linear terms usually suffice to model
migration and mutational effects, whereas higher order terms are needed to
model selection, see~\cite{Kimura1964,Li:1977,shimakura2}.

There are many statistical quantities of interest in population genetics that
can be computed by solving boundary value problems of the form
\begin{equation}\label{eqn1.3.01}
  (\LK+V) u= f\text{\; in\; }\Int\Sigma_n,\text{\; with\; }u\restrictedto_{b\Sigma_n}=g.
\end{equation}
For example, the probability of a path of the underlying process exiting through
a given portion of the boundary, or the expected first arrival time at a 
portion of the boundary, are expressible as the solutions of such boundary value
problems.  Examples of this type can be found in~\cite{littler1975-1}, and are
further discussed below.

A method for solving some of these problems, at least in principle, is given
in~\cite{shimakura1}, though it is not very explicit.  In this note we
introduce a computationally effective method for solving high-dimensional,
inhomogeneous regular and Dirichlet problems for the Kimura operator itself,
i.e. with $V=0.$ For the regular problem, one specifies minimal regularity
requirements for the solution at the boundary. For the Dirichlet problem, the
boundary values are specified, but the solution is then not smooth at the
boundary.  Our method also clarifies the precise regularity of the Dirichlet
solution in the closed simplex, at least when $f$ and $g$ are sufficiently
smooth. In addition we show, in somewhat greater generality, how to find the
eigenfunctions of these operators, which are represented as products of
functions of single variables, and how to compute the expansion coefficients.

The operator, $\LK+V,$ and variants thereof, appear in many classical papers in
population genetics,
see~\cite{Ewens,Fisher1922,Kimura1955_1,Kimura1955_2,Kimura1955_3,Wright1931}. Recently,
there has been a resurgence of interest in using the Kimura diffusion equation
as a forward model for maximum likelihood estimators of selection coefficients,
demographic models, mutation rates, effective population sizes, etc.  The
evolution of other observable measures of genetic variability such as the
allele frequency spectrum, or site frequency spectrum, can also be shown to
satisfy a variant of the Kimura diffusion equation,
see~\cite{EvansSlatkinShvets, BustamanteEtAl}.  To use this diffusion process
as a forward model, one either needs to have efficient means for solving the
Kimura diffusion equation,
see~\cite{BustamanteEtAl,SBS,Bollback:2008br,Lacerda:2014gi}, or one must
simulate the underlying stochastic process,
\cite{EvansSlatkinShvets,EvansSchraiberSlatkin}.  In most previous work where
the Kimura diffusion equation is solved, the underlying space is
1-dimensional. Even in one dimension, many authors employ numerical methods
that rely on finite difference approximations. These are, however, not reliable
for imposing the subtle boundary conditions that arise with degenerate
operators like $\LK,$ and can in fact lead to errors; see~\cite{KTS}, and the
supplement to~\cite{BWS}. By contrast, our approach provides a stable
construction, mathematically equivalent to a Gram-Schmidt procedure, of bases of
eigenfunctions. These can be used to accurately solve both elliptic and
parabolic problems, as well as compute approximations to the heat kernel itself,
which are of central importance in a variety of applications;
see~\cite{Song:2012cb,SBS,Steinrucken:9999ib}.

Our construction for the polynomial eigenfunctions of $\LK+V$ is
applicable provided that the operator has ``constant weights,''
see~\eqref{constwts} below.  The case of positive weights has been
studied extensively in the literature. Kimura \cite{kimura1956} and
Karlin and McGregor \cite{karlin:64} used hypergeometric functions to
study the two-dimensional case.  Littler and Fackerell
\cite{littler1975} generalized Karlin and McGregor's approach to
higher dimensions using bi-orthogonal polynomial systems. Griffiths
\cite{griffiths1979} showed that the polynomial eigenfunctions of the
diffusion operator corresponding to a repeated eigenvalue can be
orthogonalized and grouped together to form reproducing kernel
polynomials that appear in the transition function expansion of the
diffusion process. One way to represent the orthogonal polynomials
in the reproducing kernels is via a triangular
construction of Proriol \cite{proriol:57} and Koornwinder
\cite{koornwinder:75} for multivariate
Jacobi polynomials. Griffiths and Span\'o also discuss this
construction \cite{griffiths2010}, and provide probabilistic
connections to multivariate versions of several families of classical
orthogonal polynomials, including the Jacobi polynomials that arise
here \cite{griffiths2011}.  In the present work, we present a direct
construction of polynomial eigenfunctions for the $V=0$ case, so that
it is not necessary to take limits of the positive weight case as the
mutation rates approach zero \cite{griffiths1979}.  One novelty that
arises is that when $V=0$, as functions on the $n$-simplex, these
eigenfunctions do not belong to a single $L^2$-space, but each belongs
to an $L^2$-space of some stratum of the boundary. Their coefficients
in the representation of a function in terms of this spanning set are
computed as inner products on these lower dimensional strata.

The constructions presented here can serve as the foundation
for a perturbative method for solving Kimura-type diffusions
with a more complicated vector field, also modeling selection.  We
will return to these and other elaborations of the theory presented
here in a subsequent publication.

\vspace*{10pt} {\small \centerline{\bf Acknowledgements} CLE is grateful for
  many useful and informative discussions with Camelia Pop, Rafe Mazzeo, and
  Yun Song on questions connected to the contents of this paper. We are also
  very grateful for the careful reading and detailed suggestions made by the
  referees of this paper.}

\section{Some Facts about the Kimura Diffusion Operator}
We begin our analysis by reviewing some of the remarkable properties of $\LK.$
A very important fact about $\LK$ is the result of K.~Sato \cite{Sato}, which
states that if $U$ is a $\cC^2$--function that vanishes on $\Sigma_n,$ then
$[\LK U]\restrictedto_{\Sigma_n}=0$ as well.  Thus we can start with a function
$U(x_1,\dots,x_{n+1})$ defined on $\Sigma_n,$ and extend it to be independent
of any one of its arguments, say $x_j.$ This uniquely defines a function on the
projection of the simplex to the hyperplane $\{x_j=0\},$ and vice versa. For
definiteness we take $j=n+1.$ This gives a function
%
\begin{equation}
  u(x_1,\dots,x_n) = U\big(x_1,\cdots,x_n,1-(x_1+\cdots+x_n)\big)
\end{equation}    
defined on a \emph{projective model} of the simplex:
\begin{equation}
  \tSigma_n=\{x\in\bbR^n:\: x_1+\cdots+x_n\leq 1\text{ with }\,x_i\ge 0\,\text{
    for }\,i=1,\dots,n\}.
\end{equation}
To compute $\LK U\restrictedto_{\Sigma_n},$ we can apply $\LK$ in $n$-variables
to $u:$
\begin{equation}
  \LK U(x_1,\dots,x_{n+1})\restrictedto_{\Sigma_n}=\sum_{1\leq i,j\leq
    n}x_i(\delta_{ij}-x_j)\pa_{x_i}\pa_{x_j}u(x_1,\dots,x_n).
\end{equation}
Here and in the sequel, when using a projective model, we let
$x_{n+1}=1-(x_1+\dots+x_n).$ The projective model is useful for computations, whereas the affine model
shows that this operator is entirely symmetric under permutations of the
variables $(x_1,\dots,x_{n+1}).$  In particular, it makes clear that, from the
perspective of $\LK,$ all the vertices of $\Sigma_n$ are ``geometrically'' identical, something
that is not evident in the projective model. 

In this note we further investigate some remarkable properties of the operator
$\LK,$ which are hinted at in~\cite{shimakura1,shimakura2}. We first consider
a recursive construction of the polynomial eigenfunctions of $\LK.$ After
that we show how to solve the inhomogeneous Dirichlet problem for $\LK,$ given
in~\eqref{eqn1.3.01}.  It follows easily from Sato's theorem that this problem
does not generally have a solution in $\cC^2(\Sigma_n),$ even if $f$ and $g$
are both in $\cC^{\infty}.$ Our method of solution exhibits the precise form
of the singularities along the various boundary strata.

\subsection{Vector fields}
A generalized Kimura diffusion in $\Sigma_n,$ with ``standard'' second order
part, is a second order differential
operator of the form
\begin{equation}\label{eqn2.1}
  \tL=\sum_{1\leq i,j\leq n}x_i(\delta_{ij}-x_j)\pa_{x_i}\pa_{x_j}+\sum_{j=1}^n\tb_j(x)\pa_{x_j}.
\end{equation}
The vector field is normally required to be inward pointing, which means that
\begin{equation}
  \tb_j(x)\restrictedto_{x_j=0}\;\geq 0\text{ for }j=1,\dots,n,
\end{equation}
and
\begin{equation}
  \sum_{j=1}^n\tb_j(x)\restrictedto_{x_1+\cdots+x_n=1}\,\leq 0.
\end{equation}
We denote the first order terms by
\begin{equation}
  \tV=\sum_{j=1}^n\tb_j(x)\pa_{x_j}.
\end{equation}
In the affine model
\begin{equation}\label{eq:L:affine}
  L=\sum_{1\leq i,j\leq
    n+1}x_i(\delta_{ij}-x_j)\pa_{x_i}\pa_{x_j}+V, \qquad
  V=\sum_{j=1}^{n+1}b_j(x)\pa_{x_j},
\end{equation}
with the additional condition that
\begin{equation}\label{eq:vec:tan}
  \sum_{j=1}^{n+1}b_j(x)\restrictedto_{x_1+\cdots+x_{n+1}=1}\,=0.
\end{equation}

Considerable generalizations of this class of operators and spaces are
introduced in~\cite{EpMaz2}; though in the present work we concentrate 
on the classical cases of model operators on simplices and positive orthants,
$\bbR_+^n.$ In most of the Probability and Population Genetics literature a
different normalization is employed, namely $\LK = \frac 12\left(\sum
x_i(\delta_{ij}-x_j)\pa_{x_i}\pa_{x_j}\right)$.
In this paper we use the normalization more common in mathematical
analysis, which omits  the factor of $1/2$.

The boundary of $\Sigma_n$ is a stratified space.  If $K$ is a boundary face of
$\Sigma_n$ of codimension $n-k,$ then it is again a simplex and is represented,
in the affine model, by a subset of the form
$x_{i_1}+\cdots+x_{i_{k+1}}=1$  with $x_{i_l}\ge0$ for
$l=1,\dots,k+1,$ which implies that $x_{j_1}=\cdots=x_{j_{n-k}}=0.$ We set
$\cI=\{i_1,\dots,i_{k+1}\}$ and
\begin{equation}\label{eq:cJ}
    \cJ=\{j_1,\dots,j_{n-k}\}=\{1,\dots,n+1\}\setminus \cI.
\end{equation}
We  denote this boundary face by $K_{\cI}.$ 

Every boundary face is a simplex, and the formula for the operator
analogous to $\LK$ is the same for any boundary face. For example for $K_{\cI}$,
the sum in (\ref{eq:LK:def}) is simply  restricted to the variables
$\{x_{i_1},\dots,x_{i_{k+1}}\}.$ We denote this operator by
\begin{equation}
  \LKs{\cI} = \sum_{i,j\in\cI} x_i(\delta_{ij}-x_j)\pa_{x_i}\pa_{x_j}.
\end{equation}
Let
$v$ be a $\cC^2$-function on $K_{\cI}$ and let $\hv$ denote any
$\cC^2$-extension of $v$ to the ambient $\bbR^{n+1}.$ K.~Sato, in fact, proved, (see~\cite{Sato})
that 
\begin{equation}\label{eq:Ohta:cI}
  \LKs{\cI}v=(\LK \hv)\restrictedto_{K_{\cI}}.
\end{equation}
Thus, the restriction or extension of $\LK$ to boundary strata or
higher-dimensional simplices is canonical, similar to the connection
between the projective and affine models discussed at the beginning of
this section. The extension property makes it more natural to label
the ``last'' variable as $x_{n+1}$ rather than $x_0$, since it may not
actually be the last.

In the affine representation, a vector field $V$ (satisfying (\ref{eq:vec:tan}))
is tangent to $K_{\cI}$ provided that
\begin{equation}
  V x_j\restrictedto_{x_j=0}\,=0, \qquad\quad (j\in\cJ).
\end{equation}
Condition (\ref{eq:vec:tan}) requires that $V$ be tangent to
$\Sigma_n$ itself,
i.e.~$V(x_1+\cdots+x_{n+1})\restrictedto_{\Sigma_n}\,=0.$ For vector
fields, the analogues of Sato's results are obvious: if $U$ vanishes
on $\Sigma_n$ then $VU=0$, so extending a function $U$ defined on
$\Sigma_n$ to be constant in any coordinate direction leads to an
unambiguous value of $VU\restrictedto_{\Sigma_n}$; and, defining
\begin{equation}
  V_{\cI}=\sum_{i\in\cI} b_i(x)\partial_{x_i},
\end{equation}
we see that if $V$ is tangent to $K_{\cI}$ then $V_{\cI}v =
(V\hv)\restrictedto_{K_{\cI}}$, where $v$ and $\hv$ are defined as in
(\ref{eq:Ohta:cI}).


  \subsection{Constant weights and the Hilbert space setting}
  In addition to $\LK $ there are other classes of ``special'' Kimura diffusion
  operators. Classically one singles out operators with constant ``weights.''
  (This terminology is introduced in~\cite{EpMaz4}.) These operators have the
  property that the functions $b_j(x)$ (or $\tb_j(x))$ are linear and
\begin{equation}\label{constwts}
  (V x_j)\restrictedto_{x_j=0}\,=b_j(x)\restrictedto_{x_j=0}\,=b_j,
\end{equation}
a constant.  In the projective model, one may readily show that
  such a vector field takes the form
\begin{equation}\label{eq:Vtil:b}
  \tV_{\bb}=\textstyle\sum_{j=1}^n(b_j-Bx_j)\pa_{x_j},
\end{equation}
where
\begin{equation}
  \bb=(b_1,\dots,b_{n+1}) \text{\; and\; } B=b_1+\cdots+b_{n+1}.
\end{equation}
Note that $b_{n+1}$ enters (\ref{eq:Vtil:b}) through $B$.
We define
%
\begin{equation}
  \LKs{\bb} = \LK+V_{\bb}.
\end{equation}
These operators are special because they are self-adjoint with respect
to the $L^2$ inner product on $\tSigma_n$ defined by the following
measure, which (up to normalization) has the Dirichlet density
representing the stationary distribution of the underlying Markov
process (see e.g.~\cite{griffiths1979,griffiths2010,griffiths2011})
\begin{equation}
  d\mu_{\bb}(x)=w_{\bb}(x)\, dx_1\cdots dx_n, \qquad\quad
  w_{\bb}(x) = \prod_{j=1}^{n+1}x_j^{b_j-1}.
\end{equation}
Indeed, in this case $\LKs{\bb}$ in (\ref{eq:L:affine}) may be written
\begin{equation}\label{eq:L:sym}
  \LKs{\bb}u = \sum_{i<j}^{n+1}\frac{(\pa_{x_i} - \pa_{x_j})[w_{\bb} x_i x_j (\pa_{x_i} - \pa_{x_j})u]}{w_{\bb}},
\end{equation}
where the sum is over all pairs $i,j\in\{1,\dots,n+1\}$ with $i<j$, and,
  in the projective model
\begin{equation}\label{eq:L:sa}
  \la \widetilde{\LKs{\bb}} u,v\ra = \la u,\widetilde{\LKs{\bb}} v\ra = -\sum_{i<j}^{n+1} \int_{\tSigma_n} x_ix_j[(\pa_{x_i}
- \pa_{x_j})\hat{u}][(\pa_{x_i} - \pa_{x_j})\hat{v}]\,d\mu_{\bb}(x).
\end{equation}
Here $\hat{u}$, $\hat{v}$ are independent of $x_{n+1}$ and agree
  with $u$, $v$ on $\tSigma_n$. In deriving (\ref{eq:L:sa}), it was
  assumed that $w_{\bb}x_ix_j[(\pa_{x_i}-\pa_{x_j})u]v=0$ and
  $w_{\bb}x_ix_j[(\pa_{x_i}-\pa_{x_j})v]u=0$ on the faces $\{x_i=0\}$
  and $\{x_j=0\}$, so care must be taken in defining the domain of
  $\widetilde{\LKs{\bb}}$
  when working in the Hilbert space setting with $\bb=\mathbf{0},$ see~\cite{shimakura1}.
  Note that $x_{n+1}$ is treated as an independent variable in these
  formulas when computing partial derivatives, but
  $x_{n+1}=1-(x_1+\cdots+x_n)$ when evaluating integrals over
  $\tSigma_n$. A useful variant of (\ref{eq:L:sym}) is
\begin{equation}
  \widetilde{\LKs{\bb}} u = \sum_{i<j}^n
    \frac{(\pa_{x_i} - \pa_{x_j})[w_{\bb} x_i x_j (\pa_{x_i} - \pa_{x_j})u]}{w_{\bb}}
  + \sum_{i=1}^n \frac{\pa_{x_i}(w_{\bb} x_i x_{n+1} \pa_{x_i} u)}{w_{\bb}},
\end{equation}
where $x_{n+1}$ is now treated as a dependent variable, i.e.~$\pa
  x_{n+1}/\pa x_i=-1$.  If all the $b_j$ are positive, then
$d\mu_{\bb}(x)$ has finite total mass. In this paper we are
primarily interested in the case where all of the $b_j$
vanish, in which case the $\mu_{\bb}$-volume of $\Sigma_n$ is infinite.

\subsection{The Dirichlet problem and alternative function spaces}
In section~\ref{sec4} we present a method for solving the inhomogeneous
Dirichlet problem in~\eqref{eqn1.3.01}.  These results have extensions to the
case of Kimura diffusions with constant weights, though Dirichlet boundary
conditions are only appropriate on faces $\{x_j=0\}$ for which $b_j<1$.  For
simplicity, we focus our attention here on the case when all the weights are
zero, which is already of central importance in applications. We also assume
that $f$ and $g$ in~\eqref{eqn1.3.01} are sufficiently smooth. A different
analysis of this problem, employing blow-ups, appears
in~\cite{Jost1,Jost2,Jost3}. The blow-up approach gives a much less explicit
description of the singularities that arise when the data is smooth on the
simplex itself, but allows for considerably more singular data.

We use the notation and definitions of various function spaces given in
  the monograph~\cite{EpMaz2}. The principal symbol of the Kimura diffusion
  operator,
  \begin{equation}
   P^{\Kim}(\xi)= \sum_{i,j}x_i(\delta_{ij}-x_j)\xi_i\xi_j,
  \end{equation}
  defines the dual of the \emph{intrinsic} metric on the simplex. This
  corresponding metric is singular along the boundary and incomplete, i.e., the
  boundary is at a finite distance from interior points. The distance function
  on $\Sigma_n$ defined by this metric is equivalent to
  \begin{equation}\label{eqn1.23} 
    \rho_i(x,y)=\sum_{j=1}^{n+1}|\sqrt{x_j}-\sqrt{y_j}|.
  \end{equation}
  We also use two scales of anisotropic H\"older spaces,
  $\cC^{k,\gamma}_{\WF}(\Sigma_n)$ and
  $\cC^{k,2+\gamma}_{\WF}(\Sigma_n),$ $k\in\bbN_0, \gamma\in (0,1),$
  introduced in~\cite{EpMaz2}, with respect to which the operator
  $\LK$ has optimal mapping properties. These spaces are defined with
  respect to the intrinsic metric. For example
  $f\in\cC^{0,\gamma}_{\WF}(\Sigma_n)$ if $f\in\cC^0(\Sigma_n)$ and
  \begin{equation}
    [f]_{0,\gamma,\WF}=\sup_{x\neq y\in\Sigma_n}\frac{|f(x)-f(y)|}{\rho_i(x,y)^{\gamma}}<\infty.
  \end{equation}
  If $\lambda<0$ and $f\in\cC^{k,\gamma}_{\WF}(\Sigma_n),$ then the
  elliptic equation, $(\LK-\lambda)u=f,$ has a unique solution,
  $u\in\cC^{k,2+\gamma}_{\WF}(\Sigma_n),$ indicating that this is
  indeed the correct notion of ``elliptic regularity'' for operators
  of this general type.




\section{A 1-d Example}\label{sec2}
We begin by considering the Dirichlet problem in the 1d-case. These results are
well known, and serve as motivation for our subsequent development.
Suppose that we would like to find the solution to
\begin{equation}
  x(1-x)\pa_x^2u=f\text{\; with\; }u(0)=g_0,\, u(1)=g_1.
\end{equation}
We can write $u=u_0+(1-x)g_0+xg_1,$ where $u_0$ vanishes at the boundary of
$[0,1].$  It is apparent that $u_0$ cannot be $\cC^2$
up to the boundary unless $f(0)=f(1)=0.$ In fact,
\begin{equation}\label{eq:f0:1d}
  u_0(x) = x\log xf(0)+(1-x)\log(1-x) f(1)+\tu_0(x),
\end{equation}
where
\begin{equation}\label{eq:u0:1d}
  x(1-x)\pa_x^2\tu_0=f(x)-[(1-x)f(0)+x f(1)]
  \overset{\text{def}}{=}\fone\text{\; with\; }\tu_0(0)=\tu_0(1)=0;
\end{equation}
the right hand side, $\fone,$ is as smooth as $f,$ and vanishes at $0$ and $1.$
As was shown in~\cite{WF1d,EpMaz2}, this equation has a unique solution, with optimal
smoothness measured in the anisotropic H\"older spaces,
$\cC^{k,2+\gamma}([0,1]).$ In particular, if $f\in\cC^{\infty}([0,1])$ then so
is $\tu_0.$ 

The eigenfunctions of $ x(1-x)\pa_x^2$ that vanish at the boundary are
polynomials of the form
$x(1-x)p_m(x),$ where $p_m$ is the polynomial of degree $m\ge0$ that
satisfies the equation%
\begin{equation}\label{eq:eig:1d}
  \big[ x(1-x)\pa_x^2 + 2(1-2x)\pa_x -2 \big] p_m = \lambda_m p_m.
\end{equation}
For later reference, the left-hand side may also be written
$[\LKs{\btwo}-2]p_m$.
By inspecting the action on $\cP_d/\cP_{d-1}$, where $\cP_d$ is the
  space of polynomials of degree at most $d$, we see that
  $\lambda_d=-(d+1)(d+2)$.  Since the eigenfunctions are orthogonal with
  respect to $d\mu_{\bzero}(x)$, these polynomials are orthogonal with
  respect to the inner product
\begin{equation}\label{eq:inner:prod}
  \langle p,q\rangle=\int_{0}^1p(x)q(x)\,x^\alpha(1-x)^\beta dx,
\end{equation}
with $\alpha=\beta=1$. Thus, they are multiples of the corresponding Jacobi
polynomials \cite{szego,gautschi:book}, $p_d(x)\propto P_d^{(1,1)}(2x-1).$ If we
choose the normalization $\|p_d\|_{L^2([0,1];d\mu_{\btwo})}=1$, the result would be the same as performing
Gram-Schmidt on the monomials $\{1,x,x^2,\dots\}$. This
yields the 3-term recurrence
\begin{equation}\label{eq:recur}
\begin{aligned}
  & p_0(x)=\sqrt{1/\gamma_0},\qquad \sqrt{b_1}\, p_1(x)= (x-a_0)p_0(x) \\
  & \sqrt{b_{m+1}} \, p_{m+1}(x) = (x-a_m)p_m(x) - \sqrt{b_m}\, p_{m-1}(x), \qquad (m\ge1),
\end{aligned}
\end{equation}
where $\gamma_0=1/6$, $a_m=1/2$ and $b_m=\frac{m(m+2)}{4[4(m+1)^2-1]}$ in this case.
Solving (\ref{eq:u0:1d}) for $\tu_0$ now boils down to representing $f^{(1)}$ in
the eigenbases:
\begin{equation}\label{eq:expansion:1d}
  \tf(x) = \frac{f^{(1)}(x)}{x(1-x)} = \sum_{m=0}^\infty c_m p_m(x) \quad \Rightarrow \quad
  \tu_0(x) = \sum_{m=0}^\infty (c_m/\lambda_m) x(1-x)p_m(x).
\end{equation}
The simplest way to obtain an approximation of $\tf(x)$ in $\cP_{N-1}$
is to evaluate $f^{(1)}(x)$ at the zeros $x_j$ of $p_N(x)$, and to
compute the coefficients using Gauss-Lobatto quadrature:
\begin{equation}
  c_m = \int_0^1 \tf(x)p_m(x)x(1-x)\,dx = \int_0^1 f^{(1)}(x)p_m(x)\,dx \approx
  \sum_{j=1}^{N}f^{(1)}(x_j)p_m(x_j)\omega_j.
\end{equation}
Here we used the fact that $f^{(1)}(x)$ vanishes at $x_0=0$ and
$x_{N+1}=1$.  The abscissas $x_j$ and weights $\omega_j$ are easily
found using a variant of the Golub-Welsch algorithm
\cite{gautschi:book,golub:welsch}.  Further details of our numerical
implementation will be given elsewhere.

We can estimate the size of the coefficients $\{c_m\}$ in terms of the
  smoothness of $\fone,$ using the facts that
  \begin{equation}
    \begin{split}
      c_m&=\int_0^1 f^{(1)}(x)p_m(x)\,dx \qquad\text{and} \\
      \int_0^1\LKs{\btwo}p_m\fone dx&=\int_0^1p_m[\LK+2]\fone dx.
    \end{split}
  \end{equation}
The second formula is a special case of~\eqref{eq:shimakura} below. If $\fone$
is in $\cC^{2l}([0,1]),$ then we can iterate the integration by parts formula to obtain:
\begin{equation}
  c_m=\frac{(-1)^l}{[m(m+3)]^l}\int_0^1p_m(x)[\LK+2]^l\fone dx.
\end{equation}
As we show below,   there is a constant $C,$
independent of $m$ so that
\begin{equation}
  \|p_m\|_{L^{\infty}}\leq C\sqrt{m(m+3)}\|p_m\|_{L^2([0,1];d\mu_{\btwo})}=C\sqrt{m(m+3)∂}|,
\end{equation}
and therefore, there is a constant $\tC$ so that
\begin{equation}\label{eqn2.12}
  |c_m|\leq \tC\frac{\|[\LK+2]^l\fone\|_{L^{\infty}}}{[m(m+3)]^{l-\frac 12}}.
\end{equation}
In this instance
it is easy to see that
\begin{equation}
  \|[\LK+2]^l\fone\|_{L^{\infty}}\leq C_l\|[f\|_{\cC^{2l}}.
\end{equation}
This is almost a spectral estimate, but we lose one order of decay, in part
because we are estimating in the $L^{\infty}$-norm, and in part because there
is an implicit division of $\fone$ by $x(1-x)$ in the formula for $c_m.$

We can also use the $L^2$-norm to estimate these coefficients via
\begin{equation}\label{eqn2.12.1}
  |c_m|\leq \tC'\frac{\|w_{\btwo}^{-1}[\LK+2]^l\fone\|_{L^2([0,1];d\mu_{\btwo})}}{[m(m+3)]^{l}}.
\end{equation}
While the denominator is now $[m(m+3)]^{l}$ the numerator implicitly involves
the $L^2$-derivative of $|[\LK+2]^l\fone|^2$ at the boundary of $[0,1].$

Remarkably, very similar approaches work to find the eigenfunctions of $\LK$
and solve the Dirichlet problem in any dimension. This is
explained in the following two sections. In  Section~\ref{sec3} we give a
novel construction for the eigenfunctions of the neutral Kimura diffusion on
the $n$-simplex, which highlights their vanishing properties on subsets of the
boundary. In Section~\ref{sec4} we show how to solve the Dirichlet problem on
an $n$-simplex, with arbitrary smooth data.

\section{The Polynomial Eigenfunctions of $\LK$}\label{sec3}

In this section we give a hierarchical method of constructing the eigenfunctions of
$\LK,$ with considerable control over their vanishing properties on
$b\Sigma_n.$ As before, we let $\cP_d$ denote polynomials of
degree at most $d;$ the variables involved will be clear from the
context.

Our results are based upon a formula, which follows easily from a similar
calculation in the work of Shimakura, see Section 7 of~\cite{shimakura1}. As
noted above, we let $\LKs{\bb}=\LK +V_{\bb}$, where $V_{\bb}$ has linear coefficients
and assigns constant weights to the hypersurface components of $b\Sigma_n.$ We
then let $\cI=\{i_1<i_2<\cdots <i_{k+1}\}\subset\{1,\dots,n+1\}.$ Shimakura's
work implies the following formula
\begin{equation}\label{eq:shimakura}
  \LKs{\bb} (w_{\cI}\psi)=w_{\cI}\left[\LKs{\bb^{\prime}}-\kappa_{\cI}\right]\psi.
\end{equation}
Where
\begin{equation}
  w_{\cI}(x)=\prod_{j=1}^{k+1}x^{1-b_{i_j}}_{i_j};
\end{equation}
\begin{equation}
  b'_j=\begin{cases}
  2-b_j&\text{ if }j\in\{i_1,\dots,i_{k+1}\},\\
  b_j&\text{ if }j\notin\{i_1,\dots,i_{k+1}\};
\end{cases}
\end{equation}
and
\begin{equation}
  \kappa_{\cI}=
  \left(\sum_{j\in\cI}(1-b_j)\right)
  \left(k+\sum_{j\notin\cI}b_j\right).
\end{equation}
If $k=n$ and $\bb=\bzero,$ then $\bb'=\btwo=(2,\dots,2).$
Equation~\eqref{eq:shimakura} has a wide range of applications, and is
especially useful in cases where some of the $\{b_i\}$ are zero.  This formula gives a
very potent method to construct eigenfunctions that have a simple form
and vanish on certain parts of the boundary.

Recall that the $\cC^0$-graph closure of $\LK$ acting on
$\cC^3(\Sigma_n)$ is what is called the ``regular operator''
in~\cite{EpMaz2}, which is the ``backward'' Kolmogorov operator in
applications to Population Genetics. The eigenfunctions that we
construct are polynomials and hence in the domain of the regular
operator. Indeed, we will show that for each natural number $d,$ the
eigenfunctions of degree less than or equal to $d$ actually span
$\cP_d.$ Hence this is the complete set of eigenfunctions for the
regular operator.

As functions on the $n$-simplex, these eigenfunctions do not belong to a single
$L^2$-space, but each belongs to an $L^2$-space of some stratum of the
boundary. Their coefficients in the representation of a function in terms of
this spanning set are computed as inner products on these lower dimensional strata.

\subsection{Hierarchy of Polynomial Eigenfunctions}
\label{sec:efun}

The simplest polynomial eigenfunctions of $\LK$ are the functions
$\{x_1,\dots,x_{n+1}\},$ which are null vectors.  Each of these
eigenfunctions vanishes on a codimension 1 boundary face of
$\Sigma_n.$ Of course, a constant function is also a null-vector for
$\LK,$ but it already belongs to the span of the others since
$x_1+\cdots+x_{n+1}=1.$  To fit with the pattern below, one can
  write these functions as $x_{i_1}\psi(x)$, where $\psi(x)\equiv1$ spans
  the space of constant functions determined by their value at vertex
  $i_1$ (where $x_{i_1}=1$).

Next, for each distinct pair $1\leq i_1<i_2\leq n+1,$ we look
for eigenfunctions of the form $x_{i_1}x_{i_{2}}\psi(x).$ If
$\psi(x)$ is only a function of $x_{i_1}$ (or $x_{i_2}$) and
satisfies $[\LKs{\btwo}-2]\psi=\lambda\psi$, i.e.
\begin{equation}\label{eq:xi1:xi2:psi}
  x_{i_1}(1-x_{i_1})\pa^2_{x_{i_1}}\psi+2(1-2x_{i_1})
  \pa_{x_{i_1}}\psi-2\psi=\lambda\psi,
\end{equation}
then, by (\ref{eq:shimakura}), $x_{i_1}x_{i_{2}}\psi(x_{i_1})$ is
also an eigenfunction of the original operator $\LK $ with the same
eigenvalue.  Note that if
$\{j_1,\dots,j_{n-1}\}=\{1,\dots,n+1\}\setminus \{i_1,i_2\},$ then we
are solving on the edge
\begin{equation}
  x_{j_1}=\cdots=x_{j_{n-1}}=0.
\end{equation}
Hence the eigenfunctions $x_{i_1}x_{i_{2}}\psi(x_{i_1})$ vanish on the boundary of
this edge.  Also note that if a function $\psi$ depends only on a subset of the
coordinates, then $\LKs{\bb}\psi$ also depends only on the same subset of the
coordinates.  In particular, (\ref{eq:eig:1d})  and (\ref{eq:xi1:xi2:psi})
  agree.

With these observations, working in the different projective models,
we can construct all the polynomial eigenfunctions of $\LK.$ These
take the form $$x_{i_1}\cdots
x_{i_{k+1}}\psi(x_{i_1},\dots,x_{i_k}),$$ for various choices of
indices $\{i_1,\dots,i_{k+1}\}.$  Here $\psi$ is an eigenfunction
  of the operator
\begin{equation}
  \LKs{\cI,\bb^{\prime}} =\LKs{\bb^{\prime}}\restrictedto_{K_{\cI}}
\end{equation}
acting on a $k$-simplex, and the variables
$(x_{i_1},\dots,x_{i_{k+1}})$ range over the face defined by the
equations
\begin{equation}
  x_{j_1}=\cdots=x_{j_{n-k}}=0,
\end{equation}
using the notation of (\ref{eq:cJ}).
The weights for this operator are all equal to $2,$ and therefore
we denote it by $\LKs{\cI,\btwo}.$ A simple calculation shows that, if
$|\cI|=k+1,$ then
\begin{equation}\label{eq:efun:modP}
  \LKs{\cI,\btwo}\Big(x_{i_1}^{m_1}\cdots x_{i_k}^{m_k}\Big) =
    -\Big(|\vec m|^2+(2k+1)|\vec m|\Big)
    \Big(x_{i_1}^{m_1}\cdots x_{i_k}^{m_k}\Big)
    \mod \cP_{|\vec m|-1},
\end{equation}
where $|\vec m|=m_1 + \cdots + m_k$.  (A more general formula
  is given in (\ref{eq:efun:gen:modP}) below.) Thus,
$\LKs{\cI,\btwo}$ leaves the subspaces $\cP_d$ invariant; the $d$th
eigenvalue is $[-d^2-(2k+1)d]$ for $d\ge0$; and its multiplicity is
equal to ${d+k-1 \choose k-1}$, the dimension of
$\cP_d/\cP_{d-1}$. Moreover, applying the Gram-Schmidt procedure to
the set of monomials in the variables $x_{i_1}$, \dots, $x_{i_k}$,
ordered by degree (but arbitrarily ordered within the set of monomials
of the same degree), will lead to an orthonormal set of eigenfunctions
of $\LKs{\cI,\btwo}$. As a result, the eigenfunctions $\psi_{\cI,\vec
  m}(x_{i_1},\dots,x_{i_k})$ of this operator are multivariate
orthogonal polynomials with respect to the measure
\begin{equation}\label{eq:dmu2}
  d\mu_{\cI,\btwo}=\left(\textstyle\prod_{j=1}^{k+1} x_{i_j}\right)dx_{i_1}\cdots dx_{i_k}.
\end{equation}
The corresponding eigenfunctions $w_{\cI}\psi_{\cI,\vec m}$ of
  $\LKs{\cI}$ are orthogonal with respect to $d\mu_{\bzero}$ when
  $k=n$ and $\cI=\{1,\dots,n+1\}$, but are not normalizable (i.e.~do
  not belong to $L^2\big(\Sigma_n;d\mu_{\bzero}\big)$) when
  $k<n$. Nevertheless, they are still eigenfunctions algebraically,
  with eigenvalue

\begin{equation}
  \lambda_{\cI,\vec m} = -|\vec m|^2 - (2k+1)|\vec m| - k(k+1), \qquad
  (k=|\cI|-1)
\end{equation}
and play an essential role in solving $\LK u=f$ below,
where we use them to adjust $f$ to zero on the boundaries, just as was
done in (\ref{eq:f0:1d}) and (\ref{eq:u0:1d}) in 1-d.

From this observation, it is not difficult to demonstrate the completeness,
{as a Schauder basis for $\cC^0(\Sigma_n),$} of the eigenfunctions
obtained in this way. Let
$\cI=\{i_1,\dots,i_{k+1}\}\subset\{1,\dots,n+1\}$ be a set of indices, and
$K_{\cI}\subset b\Sigma_n,$ the corresponding boundary face. We let
$\dcP_{\cI}\subset\bbC[x_{i_1},\dots,x_{i_k}]$ denote the ideal of polynomials
defined on $K_{\cI}$ that vanish on $bK_{\cI}.$ It is easy to see that
\begin{equation}
  \dcP_{\cI}=x_{i_1}\cdots x_{i_{k+1}}\cdot\bbC[x_{i_1},\dots,x_{i_k}].
\end{equation}
Formula~\eqref{eq:shimakura} shows that this ideal is invariant under $\LKs{\cI}.$
The operator $\LKs{\cI,\btwo}$ is self adjoint with respect to the measure
$x_{i_1}\cdots x_{i_{k+1}}dx_{i_1}\cdots dx_{i_k},$ and it too preserves
$\cP_d,$ for any $d.$ Thus the polynomial eigenfunctions of $\LKs{\cI,\btwo}$
span $\bbC[x_{i_1},\dots,x_{i_k}],$ and therefore $x_{i_1}\cdots x_{i_{k+1}}$
times these eigenfunctions spans $\dcP_{\cI}.$

We next observe that an eigenfunction of this type must vanish
on every other $k$-simplex in the boundary of $\Sigma_n.$ This is
because one of the functions $\{x_{i_j}:\:j=1,\dots,k+1\}$ must appear
as a defining function for any other $k$-simplex. Using this
observation, we see that any polynomial $f$ that vanishes on
  every $k$-simplex except $K_{\cI}$ can be expanded in these
  eigenfunctions. This suggests a simple recursive method for finding
the regular solution of an equation of the form
\begin{equation}
  \LK u=f.
\end{equation}
As discussed below, the regular solution is required to belong to an
anisotropic H\"older space in the closed simplex rather than to take on
specified boundary values. Even if $f$ is a polynomial and $g=0,$ the solution to
the Dirichlet problem~\eqref{eqn1.3.01} is generally not differentiable up to
the boundary of $\Sigma_n.$

\subsection{The Regular Solution of $\LK u=f$}
As suggested by our construction of the eigenfunctions, the regular solution to
$\LK u=f$ is found recursively by working one boundary stratum at a time. 
The $k$-skeleton of $\Sigma_n$ is defined to be the union of $k$-simplices in
$b\Sigma_n.$ We denote this subset by $\Sigma^k_n.$ It is connected for $k>0,$ but
$\Sigma^k_n\setminus \Sigma^{k-1}_n$ is a disjoint union of \emph{open}
$k$-simplices with boundaries contained in $\Sigma^{k-1}_n.$ The regular
solution takes the form
\begin{equation}
  u=u_1+\dots+u_n,
\end{equation}
where the term $u_k$ is found by using the eigenfunctions constructed
above to solve {an auxiliary} inhomogeneous Dirichlet problem on
$\Sigma_n^k\setminus\Sigma^{k-1}_n.$ 

If $u\in\cC^2,$ then $\LK u$ vanishes at each vertex.  Hence, we start by
assuming that $f$ is a polynomial that vanishes on each of the vertices of
$\Sigma_n.$ This assumption is dropped in Section~\ref{sec4}, where
  imposing inhomogeneous Dirichlet boundary conditions {on $b\Sigma_n$} inevitably
  introduces singularities
  anyway.  We define $u_1$ as the solution of the equation $\LK
u_1\restrictedto_{\Sigma^1_n}=f\restrictedto_{\Sigma^1_n}$. As $f$ vanishes on
the vertices, which are the boundaries of the components of
$\Sigma_n^1\setminus\Sigma_n^0,$ we can use the eigenfunctions
$x_{i_1}x_{i_2}\psi(x_{i_1})$ constructed above to solve this equation on each
component of $\Sigma^1_n\setminus\Sigma^0_n$ independently of the others.  Note
that, using the eigenfunction representation, the function $u_1$ extends
canonically to the entire $n$-simplex.

We next solve
\begin{equation}
  \LK u_2\restrictedto_{\Sigma^2_n}=f\restrictedto_{\Sigma^2_n}-
  \LK u_1\restrictedto_{\Sigma^2_n}.
\end{equation}
The right hand side vanishes on $\Sigma^1_n,$ so we can use the
  eigenfunctions $w_{\cI}\psi_{\cI,\vec m}$ with
  $\cI=\{i_1,i_2,i_3\}$ to independently solve this equation on each
connected component of $\Sigma^2_n\setminus\Sigma^1_n.$ Recursively,
we assume that we have found $u_1,\dots,u_{k-1},$ so that
\begin{equation}\label{eq:f:stage:k}
  f-\LK (u_1+\cdots+u_{k-1})
\end{equation}
vanishes on the $(k-1)$-skeleton, and then solve
\begin{equation}
  \LK u_k\restrictedto_{\Sigma^k_n}=f\restrictedto_{\Sigma^k_n} -
  \LK (u_1+\cdots+u_{k-1})\restrictedto_{\Sigma^k_n}.
\end{equation}
Using the eigenfunctions of the form $x_{i_1}\cdots
x_{i_{k+1}}\psi(x_{i_1},\dots,x_{i_k}),$ we can solve the relevant
Dirichlet problems independently on each component of
$\Sigma^k_n\setminus\Sigma^{k-1}_n.$ 

The process terminates when we reach the interior of the $n$-simplex, where we
solve the problem
\begin{equation}
  \LK u_{n}\restrictedto_{\Sigma_n}=f\restrictedto_{\Sigma_n} -
  \LK (u_1+\cdots+u_{n-1})\restrictedto_{\Sigma_n}.
\end{equation}
Once again the right hand side vanishes on the entire boundary of
$\Sigma_n$ and we can solve this equation using eigenfunctions of the
form $x_1\cdots x_{n+1}\psi(x).$ Since this can be done for any
polynomial that vanishes on the vertices, it demonstrates that
the eigenfunctions constructed above, including the
  nullspace, are in fact a complete set of eigenfunctions for the
  operator $\LK,$ acting on polynomial functions defined on
  $\Sigma_n.$ Since these functions are dense in $\cC^2(\Sigma_n)$ it
  follows easily that this is also a complete set of eigenfunctions
  for the graph closure of $\LK$ acting on $\cC^0(\Sigma_n).$
Altogether we have proved the following result:
\begin{theorem}
  The regular operator $\LK$ acting on functions defined on $\Sigma_n$ has a complete
  set of eigenfunctions of the form
  \begin{equation}
    \cE(\LK)=\{x_1,\dots,x_{n+1}\}\cup
    \bigcup_{k=1}^{n}\bigcup_{\cI=\{1\leq
      i_1<\cdots<i_{k+1}\leq n+1\}}x_{i_1}\cdots x_{i_{k+1}}\cE(\LKs{\cI,\btwo}).
  \end{equation}
Here $\cE(\LKs{\cI,\btwo})$ denotes the eigenfunctions of the operator
$\LKs{\cI,\btwo},$ which are polynomials in the variables $\{x_{i_1},\dots,x_{i_k}\}.$
\end{theorem}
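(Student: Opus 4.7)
The plan is to verify that each element of $\cE(\LK)$ is indeed an eigenfunction, then to establish completeness first on polynomials via a recursive peeling of the boundary skeleton, and finally to extend to the graph closure on $\cC^0(\Sigma_n)$ by density.

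First I would apply Shimakura's identity \eqref{eq:shimakura}, specialized to $\bb=\bzero$ (so $\bb'=\btwo$) on an arbitrary index set $\cI$. The identity reads
\begin{equation*}
  \LK\bigl(x_{i_1}\cdots x_{i_{k+1}}\psi\bigr) = x_{i_1}\cdots x_{i_{k+1}}\bigl(\LKs{\cI,\btwo}-\kappa_{\cI}\bigr)\psi
\end{equation*}
whenever $\psi$ depends only on the variables indexed by $\cI$, so if $\psi\in\cE(\LKs{\cI,\btwo})$ has eigenvalue $\mu$, then $x_{i_1}\cdots x_{i_{k+1}}\psi$ is an eigenfunction of $\LK$ with eigenvalue $\mu-\kappa_{\cI}$. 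The existence of a polynomial eigenbasis for $\LKs{\cI,\btwo}$ in $\bbC[x_{i_1},\ldots,x_{i_k}]$ is already provided by \eqref{eq:efun:modP}: the operator preserves each $\cP_d$ and acts as the scalar $-d^2-(2k+1)d$ on the quotient $\cP_d/\cP_{d-1}$, so Gram-Schmidt on monomials graded by degree yields an $L^2$-orthonormal basis with respect to $d\mu_{\cI,\btwo}$.

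For completeness on polynomials I would proceed by induction on the boundary skeleton. Given any polynomial $f$ on $\Sigma_n$, first subtract off the vertex values using the nullspace, writing $f=\sum_{i=1}^{n+1} f(e_i)x_i + g$ with $g$ vanishing at every vertex; the first term already lies in $\Span\cE(\LK)$, so it suffices to prove $g\in\Span\cE(\LK)$. Now inductively build $u_1,\ldots,u_n$ in $\Span(\cE(\LK)\setminus\{x_1,\ldots,x_{n+1}\})$ so that the residue $g-\LK(u_1+\cdots+u_k)$ vanishes on the $k$-skeleton $\Sigma_n^{k}$. At stage $k$, Sato's restriction identity \eqref{eq:Ohta:cI} shows that the previous residue restricts on each $k$-face $K_\cI$ to a polynomial in the ideal $\dcP_\cI=x_{i_1}\cdots x_{i_{k+1}}\bbC[x_{i_1},\ldots,x_{i_k}]$; since the eigenvalues of $\LKs{\cI,\btwo}$ are nonpositive while $\kappa_{\cI}=k(k+1)>0$, the operator $\LKs{\cI,\btwo}-\kappa_{\cI}$ is bijective on each $\cP_d$, so this face residue is uniquely the image under $\LK$ of an expansion in $\{x_{i_1}\cdots x_{i_{k+1}}\psi\}_{\psi\in\cE(\LKs{\cI,\btwo})}$. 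Because any such eigenfunction vanishes identically on every other $k$-face (one of its defining factors $x_{i_j}$ is a coordinate that vanishes on any other $k$-face), summing the face-by-face contributions gives a single $u_k\in\Span\cE(\LK)$ with the required matching on $\Sigma_n^k$. At $k=n$ the final residue vanishes on all of $b\Sigma_n$ and is handled by the interior eigenfunctions $x_1\cdots x_{n+1}\psi(x)$. Applying $\LK$ to $u_1+\cdots+u_n$ produces $g$ as a linear combination $\sum c_\phi \lambda_\phi \phi$ of elements of $\cE(\LK)$, completing the polynomial case.

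The main obstacle is the passage from polynomial completeness to the Schauder-basis statement for the graph closure on $\cC^0(\Sigma_n)$. The skeleton of this last step is that polynomials are uniformly dense in $\cC^0(\Sigma_n)$ by Stone-Weierstrass and they lie in $\cC^\infty(\Sigma_n)\subset\dom(\overline{\LK})$, so continuity of the graph closure transports the algebraic spanning statement to the full domain. The delicate point, however, is that the eigenfunctions in $\cE(\LK)$ live in different $L^2$-strata: the coefficients in the expansion of a polynomial are inner products against $\cE(\LKs{\cI,\btwo})$ with respect to $d\mu_{\cI,\btwo}$ on the face $K_\cI$, not against a single global measure on $\Sigma_n$. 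Verifying that the resulting partial sums actually converge in $\cC^0$ therefore requires coefficient estimates in the spirit of \eqref{eqn2.12}--\eqref{eqn2.12.1}, obtained by iterating Shimakura's identity to trade smoothness of the data for decay in the stratum-wise coefficients. This is where the truly analytic content of the theorem resides, as opposed to the purely algebraic construction of the eigenfunctions themselves.
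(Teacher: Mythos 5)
Your proof is correct and follows essentially the same approach as the paper: Shimakura's identity \eqref{eq:shimakura} generates the eigenfunctions; a recursive peeling of the boundary skeleton, using the fact that the face residue at stage $k$ lands in the ideal $\dcP_{\cI}$, gives completeness on polynomials; and density in $\cC^2(\Sigma_n)$ yields the statement for the graph closure on $\cC^0(\Sigma_n)$. Where you differ from the paper is only in emphasis --- you make explicit the invertibility of $\LKs{\cI,\btwo}-\kappa_{\cI}$ on each $\cP_d$ (the paper leaves this implicit in its solvability claim), and you correctly flag that the final Schauder-basis assertion for $\cC^0$ is not a purely formal density argument but requires the stratum-wise coefficient decay estimates of the type the paper defers to its later ``Coefficient Estimates'' subsection, a point the paper glosses over with ``it follows easily.''
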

\begin{remark} Another useful consequence of the hierarchical description of
  the eigenfunctions of $\LK$ is that it dramatically reduces the work needed
  to find the eigenfunctions of $\LK$ acting on $\Sigma_{n+1},$ once they are
  known for $\Sigma_{n}.$ Indeed, up to choosing subsets of coordinates
  $(x_1,\dots,x_{n+1}),$ all that is required is to find the ``new'' eigenfunctions,
  which are of the form $$x_1\cdots x_{n+2}\psi(x_1,\dots, x_{n+1}).$$
\end{remark}

\begin{remark}
At stage $k$ of the recursive algorithm above, we are given a
function $f$ that vanishes on the $(k-1)$-skeleton of $\Sigma_n$ and wish to
find $u_k$ such that $\LK u_k\restrictedto_{\Sigma^k_n} =
f\restrictedto_{\Sigma^k_n}$. For simplicity of notation, we have absorbed
$\LK(u_1+\cdots u_{k-1})$ into $f$ in (\ref{eq:f:stage:k}).  Since the faces of
the $k$-skeleton decouple, we may assume $f=0$ on all faces $K_\cI$ except one
of them, which we take to be $\cI=\{1,\dots,k+1\}$. Recall from the discussion
after (\ref{eq:efun:modP}) that the relevant eigenfunctions of $\LKs\cI$ are of
the form $(x_1\cdots x_{k+1})\psi_{\cI,\vec m}(x)$, with $\{\psi_{\cI,\vec
  m}(x)\}_{\vec m\in \mathbb{N}_0^k}$ a set of multivariate orthogonal
polynomials with respect to $d\mu_{\cI,\btwo}$ in (\ref{eq:dmu2}) on
$\tSigma_k$. Thus, the generalization of (\ref{eq:expansion:1d}) to $k$
dimensions is
\begin{equation}\label{eq:expansion:kd}
  \tf(x) = \frac{f(x)}{x_1\cdots x_{k+1}} = \sum_{\vec m\in\mathbb{N}_0^k}
  c_{\vec m} \psi_{\cI,\vec m}(x) \quad \Rightarrow \quad
  u_k(x) = \sum_{\vec m\in\mathbb{N}_0^k}
  \frac{c_{\vec m}}{\lambda_{\vec m}} x_1\cdots x_{k+1}\,\psi_{\cI,\vec m}(x).
\end{equation}
\end{remark}

In~\cite{EpMaz2} it is shown that for $k$ a non-negative integer, and
$\gamma\in (0,1),$ the operator $\LK $ maps the anisotropic H\"older space
$\cC^{k,2+\gamma}_{\WF}(\Sigma_n)$ to  $\cC^{k,\gamma}_{\WF}(\Sigma_n).$ If we let
$\dcC^{*,*}_{\WF}$ denote the closed subspaces of functions vanishing at the
vertices of $\Sigma_n,$ then the inverse
\begin{equation}
 (\LK )^{-1} :\dcC^{k,\gamma}_{\WF}(\Sigma_n)\longrightarrow\dcC^{k,2+\gamma}_{\WF}(\Sigma_n)
\end{equation}
is shown to be a bounded operator. Let $C_{k,\gamma}$ denote the bound on this
operator.  If we approximate $f\in\dcC^{k,\gamma}_{\WF}(\Sigma_n)$ by
a polynomial, $p,$ then
\begin{equation}
  \|(\LK )^{-1}f-(\LK )^{-1}p\|_{\WF,k,2+\gamma}\leq C_{0,\gamma}\|f-p\|_{\WF,k,\gamma}.
\end{equation}
The classical $(k,\gamma)$-H\"older norms dominate the WF-H\"older
  norms, and $\cC^{k,\gamma}_{\WF}\subset\cC^k.$ It is well known that the
  accuracy of the best degree $d$ polynomial approximant only depends on the
  classical H\"older smoothness of the data, thus demonstrating the efficacy of
  our method for solving the equation $\LK u=f$ for any reasonably smooth data
  $f$ that vanishes at the vertices of $\Sigma_n.$

  One can generalize the construction of polynomial eigenfunctions to any
  Kimura operator, $\LKs{\bb},$ with constant weights.
%
%
  If all of the weights are positive, the measure $d\mu_{\bb}$ has
  finite mass and the polynomial eigenfunctions of the regular
  operator can be chosen to be orthonormal with respect to this
  measure.  This case is, in many ways, the easiest to deal with as
  there is a single ambient Hilbert space containing all the
  eigenfunctions.  It has been studied previously by solving
    hypergeometric equations \cite{kimura1956,karlin:64}, by computing
    bi-orthogonal systems of polynomials \cite{karlin:64,littler1975},
    or by computing orthogonal polynomials on weighted Hilbert spaces
    and grouping them into reproducing kernels
    \cite{griffiths1979,griffiths2010,griffiths2011}. The present work
    extends this third approach to the case of 0 weights, and, for
    reasons explained below, does not
    take the final step of forming the reproducing kernels.  In the
  construction of Section~\ref{sec:efun} above, we look for
  eigenfunctions of the form $x_{i_1}\cdots
  x_{i_{k+1}}\psi(x_{i_1},\dots,x_{i_k})$ with $\psi$ an
  eigenfunction of the auxiliary operator $\LKs{\bb^{\prime}}$.  If
  all the weights are positive, the leading factor of $x_{i_1}\cdots
  x_{i_{k+1}}$ is dropped, $k$ is set to $n$, and the eigenfunctions
  of $\LKs{\bb}$ are computed directly, without recourse to an
  auxiliary operator.  While eigenfunctions with respect to a
    subset of the variables are again eigenfunctions of $\LKs{\bb}$,
    they are already present in the construction at level $n$, and do
    not have to be dealt with recursively along the stratification of
    $\pa\Sigma_n.$ Computing these eigenfunctions works the same in
  both cases, and is mathematically equivalent to applying the
  Gram-Schmidt method to the monomials ordered by total degree. This
  is explained in the following section.

 When some weights are zero and some positive,
  Shimakura's formula leads to a partial hierarchical structure for the
  eigenfunctions, similar to that described here. This is discussed, to some
  extent, in~\cite{shimakura1,shimakura2}. As shown in~\cite{shimakura1} it is
  possible to specify homogeneous Dirichlet data on any boundary face where the
  weight $b_i<1.$ In general the solution is not smooth along this face, but
  has a leading singularity of the form $x_i^{1-b_i}.$ We will return to these
  questions in a subsequent publication.

\section{Numerical Construction of Eigenfunctions}
\label{sec:numerics}
We now give a practical method to construct the eigenfunctions described
above. By a change of variables these eigenfunctions can be represented as
products of functions of single variables, which are themselves eigenfunctions
of differential operators. Since it is no more difficult, we consider the
general case of a Kimura diffusion on $\Sigma_n$ with constant weights.

In 2-d, it was observed by Proriol \cite{proriol:57} that orthogonal
polynomials on a triangle can be represented as tensor products of 1-d
Jacobi polynomials.  See Koornwinder \cite{koornwinder:75} for further
background and more complicated 2-d geometries. A rather different
approach using hypergeometric functions was developed in 2-d  by
  Kimura \cite{kimura1956} and Karlin and McGregor \cite{karlin:64}.
This approach was extended to the simplex or $n$-sphere in the
context of the Laplace-Beltrami equation by Kalnins, Miller and
Tratnik \cite{kalnins:91}.  Littler and Fackerell
  \cite{littler1975} found solutions of Kimura diffusion using
  expansions in bi-orthogonal polynomials.  Griffiths
  \cite{griffiths1979} found representations for the transition
  function for this diffusion process using reproducing kernels
  expressed in terms of multivariate Jacobi polynomials; see also
  \cite{griffiths2010,griffiths2011}.  The method we derive below is
  equivalent to Griffiths' approach, though we avoid computing
  reproducing kernels as it is more efficient and numerically stable
  (at the expense of symmetry) to leave the eigenfunctions separated.
Wingate and Taylor \cite{wingate:98} showed how to generalize the
Proriol construction to the $N$-dimensional simplex in the case of a
uniform weight.  We adapt their method to allow more general Jacobi
weights of the form
\begin{equation}\label{eq:inner}
  \la p,q\ra = \int_{\tSigma_k} p(x)q(x)\,
  x_1^{\alpha_1}\cdots x_{k+1}^{\alpha_{k+1}}\,
  dx_1\cdots dx_k.
\end{equation}
The case of interest here is $\alpha_j=b_j'-1=1$, but treating the general
case is no more complicated, and can be used to study Kimura diffusion
with non-zero weights $\bb$, as shown below.  

The idea is to map the unit cube to the simplex in such a way that the desired
orthogonal polynomials on the simplex pull back to have tensor product form on
the cube. One way to do this, which differs somewhat from the choice made by
Wingate and Taylor, is with the change of variables $x=T(X)$ given implicitly
by
\begin{equation}\label{eq:change:vars}
  x_1=X_1, \quad x_2 = (1-x_1)X_2, \quad \cdots \quad x_k = (1-x_1-\cdots-x_{k-1})X_k,
\end{equation}
which solves to $x_j=\big(\prod_{i=1}^{j-1}(1-X_i)\big)X_j$ and
$x_{k+1}=1-x_1-\cdots-x_k=\prod_{i=1}^k(1-X_i)$. These are blow-ups similar to
the changes of variables that are used in~\cite{Jost1, Jost2, Jost3}.

The Jacobian matrix $DT(X)$ is lower triangular, so its determinant is easy to
compute:
\begin{equation}
  J=|\det DT|=\textstyle \prod_{j=1}^k (1-X_j)^{k-j}.
\end{equation}
The inner product (\ref{eq:inner}) may now be written
\begin{equation}\label{eq:inner2}
  \la p,q \ra = \int_{[0,1]^k} (p\circ T)(q\circ T)
  \Big(\prod_{j=1}^k\big[ X_j^{\alpha_j}
    (1-X_j)^{(k-j)+\sum_{i=j+1}^{k+1}\alpha_i}\big]\Big)\,dX_1\cdots dX_k.
\end{equation}
Next we note that if $p$ is a polynomial (in one variable) of degree
$d$, then
\begin{equation}
  p(X_j)\prod_{i=1}^{j-1}(1-X_i)^d = p\left(
  \frac{x_j}{1-x_1-\cdots-x_{j-1}}\right)(1-x_1-\cdots-x_{j-1})^d
\end{equation}
is a polynomial of degree $d$ in the variables $x_1,\cdots,x_j$.
We can therefore define $\psi_{\vec m}(x)$ via
\begin{equation}\label{eq:p:vec:m:def}
\begin{aligned}
  \psi_{\vec m}\circ T(X) &= \prod_{j=1}^k \left(
  Q^{(\alpha_j,\alpha_{j\vec m})}_{m_j}(X_j)
  \prod_{i=1}^{j-1}(1-X_i)^{m_j}\right) \\
  &= \prod_{j=1}^k\left(Q^{(\alpha_j,\alpha_{j\vec m})}_{m_j}(X_j)(1-X_j)^{\sum_{i=j+1}^k m_i}
  \right),
  \end{aligned}
\end{equation}
where
\begin{equation}
  \alpha_{j\vec m} = \alpha_{k+1}+\sum_{i=j+1}^k(\alpha_i+2m_i+1).
\end{equation}
Here $\{Q_m^{(\alpha,\beta)}\}_{m=0}^\infty$ are
orthogonal polynomials on $[0,1]$ in the inner product
(\ref{eq:inner:prod}), normalized to have unit length.
Substitution into (\ref{eq:inner2}) gives
\begin{multline*}
  \la \psi_{\vec m},\psi_{\vec m'} \ra =\\
  \prod_{j=1}^k \int_0^1 Q^{(\alpha_j,\alpha_{j\vec m})}_{m_j}(X_j)
  Q^{(\alpha_j,\alpha_{j\vec m'})}_{m_j'}(X_j)
  X_j^{\alpha_j} (1-X_j)^{\alpha_{k+1}+\sum_{i=j+1}^k(\alpha_i+m_i+m_i'+1)}\,dX_j,
\end{multline*}
which, proceeding from $j=k$ to $j=1$, may be seen to equal
$\prod_{j=1}^k \delta_{m_jm_j'}$.  We note that $\psi_{\vec m}(x)$ is a
polynomial of degree $d=|\vec m|=\sum_1^k m_j$.  We order the
multi-indices first by degree ($\vec m<\vec m'$ if $d<d'$) and then
lexicographically from right to left (if $d=d'$ then $\vec m<\vec m'$
if $m_k<m'_k$, or if $m_k=m'_k$ and $m_{k-1}<m'_{k-1}$, etc., so that
$(d,0,\dots,0)$ is smallest).  In this ordering, the $\psi_{\vec m}(x)$
are the same as one would get from applying Gram-Schmidt to the
monomials $x^{\vec m}$ in the same order.  Hence, all polynomials
in $k$ variables are accounted for by this construction. For example,
when $k=3$ and $\alpha_1=\alpha_2=\alpha_3=\alpha_4=1$, the first
10 orthogonal polynomials are
\small
\begin{align*}
  &\psi_{000}=12\sqrt{35}, \quad
  \psi_{100}=12\sqrt{105}(4x-1), \quad
  \psi_{010}=12\sqrt{210}(x+3y-1), \\
  &\psi_{001}= 36 \sqrt{70} (-1 + x + y + 2 z), \quad
  \psi_{200}= 24 \sqrt{55} (1 - 9 x + 15 x^2), \\
  &\psi_{110}= 6 \sqrt{2310} (-1 + 5 x) (-1 + x + 3 y), \quad
  \psi_{020}= 6 \sqrt{330} (3 + 3 x^2 - 21 y + 28 y^2 + 3 x (-2 + 7 y)), \\
  &\psi_{101}= 18 \sqrt{770} (-1 + 5 x) (-1 + x + y + 2 z), \quad
  \psi_{011}= 30 \sqrt{462} (-1 + x + 4 y) (-1 + x + y + 2 z), \\
  &\psi_{002}= 24 \sqrt{1155} (1 + x^2 + y^2 - 5 z + 5 z^2 + y (-2 + 5 z) + 
   x (-2 + 2 y + 5 z)),
\end{align*}
\normalsize
which span the same space as $1,x,y,z,x^2,xy,y^2,xz,yz,z^2$ and are
pairwise orthogonal. In numerical computations, it is best to leave them
in the form (\ref{eq:p:vec:m:def}) since representation in the
monomial basis is both expensive and numerically unstable when the
degree is large.

The $Q_m^{(\alpha,\beta)}$ are proportional to Jacobi polynomials,
$Q_m^{(\alpha,\beta)}(x)\propto P_m^{(\beta,\alpha)}(2x-1)$, but are
more easily computed directly from a 3-term recurrence.
Explicitly, $p_m(x)=Q_m^{(\alpha,\beta)}(x)$
satisfies the recurrence (\ref{eq:recur}) with $\gamma_0 =
\frac{\Gamma(\alpha+1)\Gamma(\beta+1)}{\Gamma(\alpha+\beta+2)}$ and
\begin{alignat*}{2}
  a_m &= \frac{1}{2}\left(1 + \frac{(\alpha+\beta)(\alpha-\beta)}{(\alpha+\beta+2m+2)(\alpha+\beta+2m)}\right)
  \overset{m=0}{=} \frac{\alpha+1}{\alpha+\beta+2}, & \qquad & (m\ge0), \\
  b_m &= \frac{m(\alpha+m)(\beta+m)(\alpha+\beta+m)}{(\alpha+\beta+2m)^2[(\alpha+\beta+2m)^2-1]}
  \overset{m=1}{=}\frac{(\alpha+1)(\beta+1)}{(\alpha+\beta+2)^2(\alpha+\beta+3)}, &  &(m\ge1).
\end{alignat*}
These formulas can be derived from the well-known recurrence relations
of the Jacobi polynomials \cite{gautschi:book,abramowitz}, modified so that
$\|p_m\|=1$.  Finally,
now that we have specified the orthonormal basis $\psi_{\vec m}(x)$, the
coefficients
\begin{equation}
  c_{\vec m} = \int_{\Sigma_k} \tf(x)\psi_{\vec m}(x) x_1^{\alpha_1}\cdots x_{k+1}^{\alpha_{k+1}}\,
  dx_1\cdots dx_k = \int_{\Sigma_k} f(x)\psi_{\vec m}(x)\,dx_1\cdots dx_k
\end{equation}
in (\ref{eq:expansion:kd}) can be computed by Gauss-Jacobi quadrature
on $[0,1]^k$ via the same change of variables
(\ref{eq:change:vars}). Further details will be presented elsewhere.

To see that the orthogonal polynomials $\psi_{\vec m}$ constructed above
are eigenfunctions of $\LKs{\cI,\bb}$, we note that
the generalization of (\ref{eq:efun:modP}) is
\begin{equation}\label{eq:efun:gen:modP}
  \LKs{\cI,\bb}\Big(x_1^{m_1}\cdots x_{k+1}^{m_{k+1}}\Big) =
    -\Big(|\vec m|^2+(B-1)|\vec m|\Big)
    \Big(x_1^{m_1}\cdots x_{k+1}^{m_{k+1}}\Big)
    \mod \cP_{|\vec m|-1},
\end{equation}
where $B=b_1+\cdots+b_{k+1}$ and $b_j=\alpha_j+1$.
Normally $m_{k+1}=0$ as the simplex is parametrized by any $k$ of
the variables $x_i$. Equation (\ref{eq:efun:gen:modP}) shows that
$\LKs{\cI,\bb}$ is upper triangular in any monomial basis ordered by
degree (with arbitrary ordering among monomials of the same degree),
and the eigenvalues can be read off the diagonal. If the Gram-Schmidt
procedure is applied to these monomials using the $d\mu_{\bb}$ inner
product, the resulting system of orthogonal polynomials will span a
nested sequence of invariant subspaces for $\LKs{\cI,\bb}$, and hence
are eigenfunctions, by self-adjointness. The above construction is
equivalent to the Gram-Schmidt procedure.

Following Griffiths \cite{griffiths1979}, one could combine all the
eigenfunctions in each eigenspace into reproducing kernels, defined as
$G_d(x,y) = \sum_{|\vec m|=d} \psi_{\vec m}(x)\psi_{\vec m}(y)$.  In
the above example, replacing $(x,y,z)$ with $(x_1,x_2,x_3)$, one would
have e.g.~$G_1(x,y) = 136080 + 181440[-x_1-x_2-x_3-y_1-y_2-y_3+
  x_1y_1+x_2y_2+x_3y_3+ (x_1+x_2+x_3)(y_1+y_2+y_3)]$. This is
theoretically appealing due to symmetry and independence of the
particular ordering chosen to compute the orthogonal
polynomials. However, it is more efficient computationally to work
with the orthogonal polynomial representation directly. It is also
numerically unstable to expand out these polynomials in terms of their
coefficients to achieve symmetry in the formulas for $G_d(x,y)$ when
$d$ is large.

\subsection{Coefficient Estimates}

If $f$ is a polynomial, or more generally, a smooth enough function on
$\Sigma_n,$ then we can use the recursive approach above to represent $f$ in
terms of the eigenfunctions of $\LK,$ $\{w_{\cI}\psi_{\cI,\vec m}\}.$ Here and
in the sequel we assume that all the eigenfunctions $\{\psi_{\cI,\vec m}\}$ are
normalized by the condition:
\begin{equation}
  \int\limits_{K_{\cI}}|\psi_{\cI,\vec m}(y)|^2d\mu_{\cI,\btwo}(y)=1.
\end{equation}
{When $\cI=\{j\},$ then $K_{\cI}$ is a vertex; the functions on $K_{\cI}$
  are constants.  We write $\psi_{\cI}$ instead of $\psi_{\cI,\vec m},$ where 
  for all $j,$ $\psi_{\{j\}}(x)=1$, the constant function. Note that
  $x_j\psi_{\{j\}}(x)$ is then a function in the null-space of $\LK$ that is
  one at $e_j$ and vanishes at all other vertices. We let
  $\mu_{\cI,\btwo}=\delta_{e_j}(x)$ be the atomic measure at the vertex $e_j$ of
  unit mass. }

We begin by subtracting off the values of $f$ at the vertices:
\begin{equation}
  \fone(x) = f(x) - \sum_{j=1}^{n+1}f(e_j) {x_j}\psi_{\{j\}}(x).
\end{equation}
The function $\fone$ now vanishes at
each of the vertices of $\Sigma_n$ and can therefore be expanded on the
1-skeleton in terms of the eigenfunctions of the form $\{w_{\cI}\psi_{\cI,\vec m}\},$
  where $\cI$ ranges over subsets of $\{1,\dots,n+1\}$ of cardinality $2.$ The
coefficients are computed as integrals over the 1-dimensional strata:
\begin{equation}
  c_{\cI,\vec m}=\int_{0}^1\frac{\fone(x)}{x(1-x)}\psi_{\cI,\vec m}(x)
  x(1-x)dx=\int_{0}^1\fone(x)\psi_{\cI,\vec m}(x)dx,
\end{equation}
where $x$ is a coordinate on the stratum $K_{\cI}.$ 
 As in~\eqref{eqn2.12} and~\eqref{eqn2.12.1}, the
rate of decay of these coefficients is determined by the smoothness of
$\fone.$

Subtracting off the contributions from the 1-dimensional strata we get
  $\ftwo,$ which vanishes on the 1-skeleton of $\Sigma_n.$ we can represent
  this function on the 2-skeleton in terms of eigenfunctions of the form
  $\{w_{\cI}\psi_{\cI,\vec m}\},$ where the cardinality of $\cI$ is $3.$
  Proceeding in this way we get a sequence of functions
  $f,\fone,\ftwo,\dots,\fk{k},\dots,\fk{n},$ where $\fk{k}$ vanishes on the
  $k-1$-skeleton of $b\Sigma_n.$

The coefficients coming from $\fk{k}$ are
  computed as integrals over the components of the $k$-dimensional part of the
  boundary of $\Sigma_n:$
  \begin{equation}
    c_{\cI,\vec m}=\int\limits_{K_{\cI}}\psi_{\cI,\vec m}(y)\fk{k}(y)dy.
  \end{equation}
  Here $\cI$ has cardinality $k+1$ and $y$ is a linear coordinate on $K_{\cI}.$
Using Shimakura's formula,~\eqref{eq:shimakura} we can show that
\begin{equation}
  \int\limits_{K_{\cI}}L_{\btwo}\psi_{\cI,\vec m}(y)\fk{k}(y)dy=
\int\limits_{K_{\cI}}\psi_{\cI,\vec m}(y)[\LKs{\cI,\bzero}+\kappa_{\cI}]\fk{k}(y)dy.
\end{equation}
From this we can show that if $L_{\btwo}\psi_{\cI,\vec m}=\lambda_{\cI,\vec
  m}\psi_{\cI,\vec m},$ then, for a function $\fk{k}\in\cC^{2l}(K_{\cI})$ that
vanishes at the boundary of $K_{\cI},$ we have:
\begin{equation}\label{eqn3.23}
    c_{\cI,\vec m}=
\frac{1}{\lambda_{\cI,\vec m}^l}\int\limits_{K_{\cI}}\psi_{\cI,\vec m}(y)[\LKs{\cI,\bzero}+\kappa_{\cI}]^l\fk{k}(y)dy.
  \end{equation}
From this relation it is clear that the rate of decay of the coefficients
$\{c_{\cI,\vec m}\}$ is determined by the smoothness of $\fk{k}$ on the
$k$-skeleton, and an $L^{\infty}$-estimate for the eigenfunctions
$\{\psi_{\cI,\vec m}\}.$

To estimate $\fk{k}$ in terms of the original data
would take us too far afield, so we conclude this discussion by proving 
sup-norm estimates on the eigenfunctions. The eigenfunctions $\{\psi_{\cI,\vec
  m}\}$ 
are eigenfunctions of the operator $L_{\cI,\btwo},$ which is self
adjoint with respect to the measure $d\mu_{\cI,\btwo},$ and has strictly positive
weights. The kernel for the operator $e^{tL_{\cI,\btwo}}$ takes the form
$p_t(y,\ty)d\mu_{\cI,\btwo}(\ty),$ with $p_t(y,\ty)=p_t(\ty,y).$ This and the
semi-group property easily imply
 that
\begin{equation}
  p_{2t}(y,y)=\int_{K_{\cI}}[p_{t}(y,\ty)]^2d\mu_{\cI,\btwo}(\ty).
\end{equation}
Since the operator has positive weights, we can use the Theorem 5.2
in~\cite{EpMaz4} to conclude that there is a constant $C_k$ depending only on
the dimension so that
\begin{equation}
  p_{2t}(y,y)\leq \frac{C_k}{\mu_{\btwo}(B_{\sqrt{2t}}(y))}.
\end{equation}
Here $B_{r}(y)$ is the ball in the intrinsic metric (see~\eqref{eqn1.23} and~\cite{EpMaz4}) of radius $\sqrt{2t}$
centered at $y.$ From the forms of the metric and the measure we can easily show that there
are constants $C_0, C_1,$ so that for small $t,$ on strata of dimension $k,$ we have  the bounds
\begin{equation}
  C_0t^{k}\leq \mu_{\btwo}(B_{\sqrt{2t}}(y))\leq C_1 t^{\frac k2}.
\end{equation}
To prove an estimate on $\|\psi_{\cI,\vec m}\|_{L^{\infty}},$ we observe that
\begin{equation}
  \psi_{\cI,\vec m}(y)e^{t\lambda_{\cI,\vec m}} =\int\limits_{K_{\cI}}p_t(y,\ty)\psi_{\cI,\vec m}(\ty)d\mu_{\cI,\btwo}(\ty).
\end{equation}
The Cauchy-Schwarz inequality and the estimates above show that
\begin{equation}
  |\psi_{\cI,\vec m}(y)|\leq \frac{C_ke^{-t\lambda_{\cI,\vec m}}}{t^{\frac k2}}.
\end{equation}
Setting $t=-1/\lambda_{\cI,\vec m},$ gives the estimate
\begin{equation}
  |\psi_{\cI,\vec m}(y)|\leq \tC_k|\lambda_{\cI,\vec m}|^{\frac k2}.
\end{equation}

Inserting this estimate into~\eqref{eqn3.23} we see that there is a constant
$C'_k,$ so that the coefficients coming from the stratum of dimension $k$
satisfy an estimate of the form:
\begin{equation}
  |c_{\cI,\vec m}|\leq
  C'_k\frac{\|[\LK+\kappa_{\cI}]^l\fk{k}\|_{L^{\infty}}}{\lambda_{\cI,\vec m}^{l-{\frac k2}}}.
\end{equation}
One-half order of decay is lost with each increase in dimension.  We can also
give an $L^2$-estimate, where no such loss explicitly occurs, wherein
\begin{equation}
  |c_{\cI,\vec m}|\leq
  C''_k\frac{\|w_{\cI}^{-1}[\LK+\kappa_{\cI}]^l\fk{k}\|_{L^{2}(K_{\cI},d\mu_{\cI,\btwo})}}{\lambda_{\cI,\vec m}^{l}}.
\end{equation}
These estimates implicitly involve $k$ derivatives of
$|[\LK+\kappa_{\cI}]^l\fk{k}|^2$ near the boundary of $K_{\cI}.$  In both estimates there are
further losses that occurs in the estimation of $[\LK+\kappa_{\cI}]^l\fk{k}$ in
terms of the original data $f.$

\section{The Dirichlet Problem}\label{sec4}

In many applications of the Kimura equation to problems in population genetics
one needs to solve a problem of the form
\begin{equation}\label{dirprob}
  \LK u=f\text{\; with\; }u\restrictedto_{S}=g.
\end{equation}
Here $S$ is a subset of $b\Sigma_n,$ generally assumed to be a union of faces.
In the constant weight case, Shimakura showed that this problem is well-posed
if the weights on the faces contained in $S$ are all less than $1.$ Note that,
if a weight is greater than or equal to 1, then, with probability 1, the paths
of the associated stochastic process never reach the corresponding face. 

In this section, for simplicity, we continue to consider the case that all
weights are zero, $S=b\Sigma_n,$ and that $f$ and $g$ have a certain degree of
smoothness. With this assumption, we show that the solution to the Dirichlet
problem has an asymptotic expansion along boundary with the first two terms
determined by local calculations, see~\eqref{eqn6.19} and~\eqref{eqn6.19.2}.  A
classical example from Population Genetics is the solution to the Dirichlet
problem with $f=-1,$ and $g=0,$ which is given by
\begin{equation}
  u(x)=\sum_{j=1}^n\sum_{0\leq i_1<\cdots<i_j\leq n}\eta(x_{i_1}+\cdots+x_{i_j})(-1)^{j},
\end{equation}
where $\eta(\tau)=\tau\log \tau.$ For a point $x\in\Sigma_n,$ the value $u(x)$ is the
expected time for a path starting at $x$ to reach $b\Sigma_n.$ This formula is
highly suggestive of the form that the general result takes;
see~\eqref{singprt0} and~\eqref{singprtsplx} below.

Boundary data that is not continuous on $b\Sigma_n$ does arise naturally in
problems connected with exit probabilities, and cannot be directly treated by
the methods in this paper. In these types of problems the boundary data is
often piecewise constant assuming only the values 0 and 1, and one has recourse
to explicit solutions, see~\cite{littler1975-1}. In this case, one could
approximate the discontinuous boundary data by smooth boundary data, and use
the methods presented here along with the Feynman-Kac formula
in~\cite{EpsteinPop} to get upper and lower bounds for the solutions of such
boundary value problems. For data without some degree of regularity one should
not expect the solution to have a simple, explicit asymptotic expansion along
the boundary. When the solution is not continuous up to the boundary,
considerable care is required in the interpretation of the partial differential
equation along the boundary, and boundary condition itself, see~\cite{Jost1,
  Jost2, Jost3}.

To start we consider the simpler problem in a positive orthant
$S_{n,0}=\bbR_+^n,$ with the model operator
\begin{equation}\label{modop0}
  L_{\bzero}=\sum_{i=1}^{n}x_i\pa_{x_i}^2.
\end{equation}
This problem is easier to solve and its solution is nearly adequate to solve
the analogous problem in a simplex. We start with a simple calculus lemma.
\begin{lemma}\label{lem1} Let $\psi$ be a $\cC^2$ function of a single
  variable, then for indices $1\leq i_1<\cdots<i_k\leq n,$ we have
  \begin{equation}
    L_{\bzero}\psi(x_{i_1}+\cdots +x_{i_k})=(x_{i_1}+\cdots +x_{i_k})\psi''(x_{i_1}+\cdots +x_{i_k}).
  \end{equation}
\end{lemma}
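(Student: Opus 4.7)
The plan is a direct chain-rule computation. I would introduce the abbreviation $s = x_{i_1}+\cdots+x_{i_k}$, so the function to be differentiated is $\psi(s)$, viewed as a function of $(x_1,\dots,x_n)$. Since $s$ depends linearly on only those coordinates whose index lies in $\cI = \{i_1,\dots,i_k\}$, the first partial derivatives are
\begin{equation*}
  \pa_{x_i}\psi(s) = \begin{cases} \psi'(s) & \text{if } i \in \cI, \\ 0 & \text{if } i \notin \cI,\end{cases}
\end{equation*}
and consequently the second partial derivatives satisfy $\pa_{x_i}^2 \psi(s) = \psi''(s)$ for $i\in\cI$ and $0$ otherwise.

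Inserting this into the defining formula \eqref{modop0} for $L_{\bzero}$, the sum over $i$ collapses to a sum over $\cI$:
\begin{equation*}
  L_{\bzero}\psi(s) = \sum_{i=1}^n x_i\, \pa_{x_i}^2 \psi(s) = \sum_{j=1}^k x_{i_j} \psi''(s) = s\,\psi''(s),
\end{equation*}
which is precisely the claimed identity after rewriting $s$ as $x_{i_1}+\cdots+x_{i_k}$.

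There is no real obstacle here: the statement is essentially a one-line application of the chain rule combined with the fact that the model operator $L_{\bzero}$ has no mixed second derivatives and no first-order terms, so the linear combination $s$ behaves almost as a single variable under $L_{\bzero}$. The only point worth emphasizing in the write-up is that the absence of cross terms in $L_{\bzero}$ is exactly what makes the right-hand side depend only on the sum $s$ and not on the individual $x_{i_j}$, which is the feature that will make the lemma useful for building separated solutions to the Dirichlet problem on the orthant in the sequel.
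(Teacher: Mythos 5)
Your computation is correct and is exactly the elementary chain-rule calculation the paper has in mind (the paper simply states ``The proof is an elementary calculation'' without spelling it out). Nothing further is needed.
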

\noindent
The proof is an elementary calculation.

We now show how to solve the problem
\begin{equation}\label{dirprob0}
  L_{\bzero}u=f\text{ in }S_{n,0}\text{\; with\; }u\restrictedto_{bS_{n,0}}=g,
\end{equation}
for $f$ a compactly supported function in $\cC^2(S_{n,0}).$ We assume that $g$
has a compactly supported extension, $\tg,$ to $S_{n,0},$ for which
$L_{\bzero}\tg$ also belongs to $\cC^2(S_{n,0}).$ It is clear that, generally,
this problem cannot have a regular solution $u\in\cC^2(S_{n,0}).$ If $u$
belongs to this space, then, for any substratum $\sigma$ of $\pa S_{n,0},$ we
have
\begin{equation}
  (L_{\bzero}u)\restrictedto_{\sigma}=L_{\bzero}\restrictedto_{\sigma} u\restrictedto_{\sigma}.
\end{equation}
Hence  $f$ and $g$ would have to satisfy the very restrictive compatibility
conditions
\begin{equation}
  f\restrictedto_{\sigma}=L_{\bzero}\restrictedto_{\sigma} g\restrictedto_{\sigma}.
\end{equation}

We look for a solution of the form $u=\tg+v,$ with $v$ solving
\begin{equation}\label{dirprob00}
  L_{\bzero}v=f-L_{\bzero}\tg\overset{\text{def}}{=}\fone\text{\; and\; }
  v\restrictedto_{bS_{n,0}}=0.
\end{equation}
Our first goal is to give a method for solving~\eqref{dirprob00}, which gives a
precise description of the singularities that arise for general smooth,
compactly supported data $(f,g).$ We begin by giving a precise definition to
the meaning of the equations in~\eqref{dirprob0}: A function $u\in\cC^2(\Int
S_{n,0})\cap \cC^0(\oS_{n,0})$ solves~\eqref{dirprob0} if
$u\restrictedto_{bS_{n,0}}=g,$ and so that $L_{\bzero}u,$ which is
initially defined only in the interior of the orthant, has a continuous
extension to $\oS_{n,0}$ with $L_{\bzero}u=f,$ throughout.

In Section~\ref{sec2} we showed how the analogous problem for $\LK$ is solved
in 1-dimension. We begin this construction by explaining how to
solve~\eqref{dirprob0} when $n=1:$ For a fixed $\epsilon\ll1,$ we let
$\psi\in\cC^{\infty}_c([0,\epsilon))$ be a non-negative function, which is $1$
in $[0,\frac{\epsilon}{2}).$ Suppose that we want to solve
\begin{equation}
  x\pa_x^2 u=f(x)\text{\; with\; }u(0)=a\neq 0.
\end{equation}
As noted above, if $u\in\cC^2([0,\infty)),$ then, unless $f(0)=0,$ it cannot
simultaneously satisfy the differential equation as $x\to 0,$ and the boundary
condition. If we write
\begin{equation}
  u(x)=\psi(x)[f(0)x\log x+a]+v(x), 
\end{equation}
then
\begin{equation}
  x\pa_x^2u=x\psi''(x)[f(0)x\log x+a]+2x\psi'(x)[f(0)(\log x+1)]+\psi(x)f(0)+x\pa_x^2v.
\end{equation}
The first two terms on the right hand side are smooth and compactly supported
away from $x=0.$ The function $v$ must solve the equation
\begin{equation}
  x\pa_x^2v=f(x)-[x\psi''(x)[f(0)x\log x+a]+2x\psi'(x)[f(0)(\log x+1)]+\psi(x)f(0)],
\end{equation}
with $v(0)=0.$ The right hand side is smooth and vanishes at $x=0,$ hence the
theory presented in~\cite{WF1d} shows that there is a unique smooth
solution to this problem.

To carry this out in higher dimensions, we need to
introduce some notation. For indices $1\leq i_1<\cdots<i_k\leq n,$ (or $n+1$) we define
the vector valued function
\begin{equation}
  X_{\hi_1,\dots,\hi_k}=\sum_{\{j\notin\{i_1,\dots,i_k\}\}}x_je_j,
\end{equation}
where $e_j$ are the standard basis vectors for $\bbR^n$ (or $\bbR^{n+1}$ --- which
is needed will be clear from the context). For a function $\varphi$ defined in
$\bbR^n,$ the composition satisfies
\begin{equation}
  \varphi(X_{\hi_1,\dots,\hi_k})=\varphi\restrictedto_{x_{i_1}=\cdots=x_{i_k}=0}.
\end{equation}
Our method relies on the following essentially algebraic
lemma:
\begin{lemma}\label{lem2}
  Let $\eta(\tau)=\tau\log \tau,$ and let $L_{\bzero}$ denote the operator
  defined in~\eqref{modop0}. For any $k\leq n,$ and distinct indices
  $\{i_1,\dots,,i_k\}\subset\{1,\dots, n+1\},$ we have
  \begin{equation}
    L_{\bzero}\eta(x_{i_1}+\cdots+x_{i_k})=1.
  \end{equation}
\end{lemma}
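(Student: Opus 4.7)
The plan is to apply Lemma \ref{lem1} directly, treating $\eta$ as the one-variable function $\psi$. First, I would compute the derivatives of $\eta(\tau)=\tau\log\tau$, namely $\eta'(\tau)=\log\tau+1$ and, crucially, $\eta''(\tau)=1/\tau$. The key observation is that $\eta$ is precisely the one-variable primitive (twice-integrated) of $1/\tau$, which is exactly what is needed to cancel the factor $(x_{i_1}+\cdots+x_{i_k})$ that appears on the right-hand side of Lemma \ref{lem1}.

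Next, I would substitute $\psi=\eta$ into Lemma \ref{lem1} to obtain
\begin{equation*}
L_{\bzero}\eta(x_{i_1}+\cdots+x_{i_k})=(x_{i_1}+\cdots+x_{i_k})\eta''(x_{i_1}+\cdots+x_{i_k})=(x_{i_1}+\cdots+x_{i_k})\cdot\frac{1}{x_{i_1}+\cdots+x_{i_k}}=1,
\end{equation*}
valid in the interior of $S_{n,0}$ where $x_{i_1}+\cdots+x_{i_k}>0$.

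There is no real obstacle here: the result is essentially a one-line verification once Lemma \ref{lem1} is in hand. The only minor subtlety worth mentioning is that $\eta(\tau)$ is only $\mathcal{C}^2$ for $\tau>0$, so strictly speaking Lemma \ref{lem1} applies on the open set where $x_{i_1}+\cdots+x_{i_k}>0$; the identity $L_{\bzero}\eta(x_{i_1}+\cdots+x_{i_k})=1$ extends continuously to the closure by taking limits, consistent with the generalized sense in which the Dirichlet problem is interpreted in the surrounding discussion. This choice of $\eta$ explains the appearance of $\tau\log\tau$ in the explicit mean exit-time formula stated just before the lemma, and it is the building block used subsequently to construct the singular part of the Dirichlet solution on $S_{n,0}$ and then on the simplex.
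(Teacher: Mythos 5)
Your proof is correct and takes essentially the same route as the paper: both invoke Lemma~\ref{lem1} with $\psi=\eta$ and use $\eta''(\tau)=1/\tau$ to cancel the prefactor. The paper additionally remarks that permutation symmetry reduces to the indices $\{1,\dots,k\}$, but that is a cosmetic shortcut and does not change the argument.
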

\begin{proof}
By the permutation symmetry of the operator $L_{\bzero}$ it suffices to consider the indices
$\{1,\dots,k\}.$ Since $\pa_{\tau}^2\eta(\tau)=1/\tau,$ this follows from Lemma~\ref{lem1}.
\end{proof}

As the next step we write the function $v=v_0+v_1,$ where $v_0$ will be the
``regular'' part of the solution, vanishing on the boundary, and $v_1$ is the
singular part, which also vanishes on the boundary and satisfies the equation
\begin{equation}
  L_{\bzero}v_1\restrictedto_{bS_{n,0}}=\fone\restrictedto_{bS_{n,0}}.
\end{equation}
As we shall see, $v_1$ belongs to $\cC^{0,\gamma}_{\WF}(S_{n,0})$ for any
$0<\gamma<1.$ In fact we can simply write a formula for $v_1:$
\begin{equation}\label{singprt0}
  v_1(x)=\sum_{j=1}^n\sum_{\{1\leq i_1<\cdots<i_j\leq
    n\}}(-1)^{j-1}\fone(X_{\hi_1,\dots,\hi_j})
\eta(x_{i_1}+\cdots+x_{i_j}).
\end{equation}
For example, if $n=2,$ then
\begin{equation}
  v_1(x_1,x_2)=\fone(x_1,0)\eta(x_2)+\fone(0,x_2)\eta(x_1)-\fone(0,0)\eta(x_1+x_2);
\end{equation}
if $n=3,$ then
\begin{multline}
  v_1(x_1,x_2,x_3)=\fone(x_1,x_2,0)\eta(x_3)+\fone(x_1,0,x_3)\eta(x_2)+\fone(0,x_2,x_3)\eta(x_1)-\\
\fone(x_1,0,0)\eta(x_2+x_3)-\fone(0,x_2,0)\eta(x_1+x_3)-\\
\fone(0,0,x_3)\eta(x_1+x_2)+\fone(0,0,0)\eta(x_1+x_2+x_3).
\end{multline}
\begin{remark} Formula~\eqref{singprt0} should be contrasted to results like
  those in~\cite{costabel-dauge}, which analyzes boundary value problems for
  uniformly elliptic operators in domains with corners. For this case the
  classical Dirichlet and Neumann boundary value problems are well posed,
  however, even if the data is infinitely differentiable, these problems
  typically do \emph{not} have smooth solutions on domains whose boundaries
  have corner-type singularities. There is no analogue of the ``regular''
  solution, which is uniquely determined in the present case. The existence of
  a regular solution is another indication of the remarkable relationship
  between the degeneracies of the operator $\LKs{}$ and the singular structure
  of the boundary of the simplex. Singularities arise in the present case from
  the requirement that the solution satisfy a Dirichlet-type boundary
  condition, which, as we have seen, is generally impossible for a function in
  $\cC^2(\Sigma_n).$ 

  A second remarkable feature of this formula is that it provides two locally
  determined terms in the expansion, along $b\Sigma_n,$ of the solution $u$ to
  the original problem in~\eqref{dirprob0}. We have
\begin{equation}\label{eqn6.19}
  u(x)=\tg(x)+v_1(x)+O(\dist(x,b\Sigma_n)),
\end{equation}
where it should be noted that
$v_1(x)=O(\dist(x,b\Sigma_n)\log\dist(x,b\Sigma_n)).$ In the non-degenerate
case, determination of the second term in such an expansion requires the
solution of a global problem.
\end{remark}

\begin{theorem}\label{thm1} Let $\fone\in\cC^{0,2+\gamma}_{\WF}(\Sigma_n),$ then
  the function $v_1$ defined in~\eqref{singprt0} locally belongs to
  $\cC^{0,\gamma}_{\WF}(S_{n,0}),$ for any $0<\gamma<1,$ as does
  $L_{\bzero}v_1.$ It satisfies the following equations
  \begin{equation}
    L_{\bzero}v_1\restrictedto_{bS_{n,0}}=\fone\restrictedto_{bS_{n,0}}\text{ and\;\; }
v_1\restrictedto_{bS_{n,0}}=0.
  \end{equation}
\end{theorem}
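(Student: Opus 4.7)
The plan is to verify the four assertions of the theorem in sequence: (i) $v_1=0$ on $bS_{n,0}$; (ii) $v_1\in\cC^{0,\gamma}_{\WF}$ locally; (iii) $L_{\bzero}v_1$, initially defined on $\Int S_{n,0}$, extends continuously to $\overline{S}_{n,0}$ and lies in $\cC^{0,\gamma}_{\WF}$ locally; (iv) its boundary value equals $\fone\restrictedto_{bS_{n,0}}$. Abbreviate $\phi_J=\fone(X_{\hJ})$ and $\eta_J=\eta(\sum_{i\in J}x_i)$, so that $v_1=\sum_{\emptyset\neq J\subset\{1,\dots,n\}}(-1)^{|J|-1}\phi_J\eta_J$.

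For (i), I would fix a boundary stratum $\bigcap_{i\in I}\{x_i=0\}$ with $I\neq\emptyset$, and decompose each index set $J$ as $J=M\sqcup N$ with $M=J\setminus I$, $N=J\cap I$. On the stratum, $\phi_J$ becomes $\fone(X_{\widehat{M\cup I}})$ and $\eta_J$ becomes $\eta\big(\sum_{i\in M}x_i\big)$, neither depending on $N$. The combinatorial identity $\sum_{N\subseteq I}(-1)^{|N|}=0$ (valid for $I\neq\emptyset$) forces the sum to collapse: all contributions from $M\neq\emptyset$ cancel, and the $M=\emptyset$ contributions are automatically zero since $\eta(0)=0$. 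For (ii), I would note that $\eta(\tau)=\tau\log\tau$, when written in the square-root variable $u=\sqrt{\tau}$, has derivative $4u\log u+2u$, which is bounded on a neighborhood of $0$; hence $\eta$ is $\cC^{0,1}$ in the intrinsic variable, so $\eta_J\in\cC^{0,\gamma}_{\WF}$ for all $\gamma\leq 1$, and $v_1$ inherits this since $\phi_J$ is smooth (being a composition of $\fone\in\cC^{0,2+\gamma}_{\WF}$ with a projection that already respects the stratification, cf.~Sato's theorem).

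For (iii) and (iv), the key computation is that, because $\phi_J$ is independent of $\{x_i : i\in J\}$ and $\eta_J$ depends only on those variables, and $L_{\bzero}=\sum x_i\pa_{x_i}^2$ has no mixed derivatives, the Leibniz rule degenerates to
\begin{equation*}
L_{\bzero}(\phi_J\eta_J)=\phi_J L_{\bzero}\eta_J+\eta_J L_{\bzero}\phi_J=\phi_J+\eta_J L_{\bzero}\phi_J
\end{equation*}
by Lemma~\ref{lem2}. Thus $L_{\bzero}v_1=\sum(-1)^{|J|-1}\phi_J+\sum(-1)^{|J|-1}\eta_J L_{\bzero}\phi_J$. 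Both summands extend continuously to $\overline{S}_{n,0}$ since each $\phi_J$ is smooth, each $\eta_J$ is continuous, and $L_{\bzero}\phi_J$ is continuous (using $L_{\bzero}\fone\in\cC^{0,\gamma}_{\WF}$ and the Sato-type identity $L_{\bzero}[\fone(X_{\hJ})]=(L_{\bzero}\fone)(X_{\hJ})$ on the relevant substratum). On the stratum $\bigcap_{i\in I}\{x_i=0\}$ with $I\neq\emptyset$, the same decomposition $J=M\sqcup N$ combined with $\sum_{N\subseteq I}(-1)^{|N|}=0$ shows that in the first sum only the term $M=\emptyset$ survives, yielding $\fone(X_{\hI})=\fone\restrictedto_I$, and that in the second sum \emph{every} value of $M$ (including $M=\emptyset$, where $\eta_J=0$) gives a cancelling contribution, producing zero.

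I expect the main obstacle will be organizing the inclusion-exclusion cleanly so that it handles arbitrary codimension strata in one step, rather than inducting on codimension; the single identity $\sum_{N\subseteq I}(-1)^{|N|}=0$ does all the work once the $M/N$ decomposition is in place, but presenting it requires care because the factors $\phi_J$, $\eta_J$, and $L_{\bzero}\phi_J$ each depend on a different subset of variables. The regularity of $L_{\bzero}v_1$ in $\cC^{0,\gamma}_{\WF}$ then follows by observing that each summand $\eta_J L_{\bzero}\phi_J$ is a product of a $\cC^{0,1}_{\WF}$ factor with a $\cC^{0,\gamma}_{\WF}$ factor, while $\sum(-1)^{|J|-1}\phi_J$ is smooth in the interior and has the required boundary regularity modulo the combinatorial cancellation.
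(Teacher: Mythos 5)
Your proof is correct, and it differs from the paper's in a small but genuine way in how the combinatorics are organized. The paper first reduces, by permutation symmetry and by continuity of $v_1$ and $L_{\bzero}v_1$ up to the closed orthant, to checking the two identities only on the open codimension-one face $\{x_n=0\}$; it then splits the sum over $J$ according to whether $n\in J$, and exhibits the cancellation as a pairwise matching after reindexing $j\mapsto j-1$. Your version handles a stratum of arbitrary codimension $\bigcap_{i\in I}\{x_i=0\}$ directly by decomposing $J=M\sqcup N$ (with $N=J\cap I$) and invoking $\sum_{N\subseteq I}(-1)^{|N|}=0$; the paper's reindexing is exactly the $|I|=1$ instance of that binomial identity. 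What your route buys is a uniform, codimension-agnostic computation that does not need the auxiliary density-plus-continuity step, and it makes transparent why the term $M=\emptyset$ is the unique survivor in the first sum (with coefficient $1$ from $\sum_{\emptyset\neq N\subseteq I}(-1)^{|N|-1}=1$); what the paper's route buys is that it is shorter and keeps the bookkeeping to a single coordinate at a time. Two small points worth tightening when writing this up: when you restrict $L_{\bzero}\phi_J$ to the stratum you should record that the surviving operator is the one in the variables outside $M\cup I$, i.e.\ $L_{\bzero}\phi_J\restrictedto = \bigl(\sum_{i\notin M\cup I}x_i\pa_{x_i}^2\bigr)\fone\bigl(X_{\widehat{M\cup I}}\bigr)$, which depends on $M$ alone — this is the precise sense in which the grouping over $N$ is legitimate; and in your regularity discussion note that $\eta$ is Lipschitz in $u=\sqrt\tau$ only locally near $u=0$ (the claim in the theorem is for $\gamma<1$ and is local, so this is enough), matching the paper's phrasing that $\eta$ is ``locally'' in $\cC^{0,\gamma}_{\WF}$.
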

\begin{proof}
The fact that $v_1\in \cC^{0,\gamma}_{\WF}(S_{n,0})$ follows immediately from
the fact that $\eta(\tau)$ is locally in $\cC^{0,\gamma}_{\WF}(S_{1,0}).$ We
use Lemma~\ref{lem2} and the fact that $X_{\hi_1,\dots,\hi_j}$ and
$x_{i_1}+\cdots+x_{i_j}$ depend on disjoint subsets of the variables to obtain that
\begin{equation}
  L_{\bzero}v_1=\sum_{j=1}^n\sum_{\{1\leq i_1<\cdots<i_j\leq
    n\}}(-1)^{j-1}\left[L_{\bzero}\fone(X_{\hi_1,\dots,\hi_j})
\eta(x_{i_1}+\cdots+x_{i_j})+\fone(X_{\hi_1,\dots,\hi_j})\right]
\end{equation}
It is clear that $L_{\bzero}v_1\in\cC^{0,\gamma}_{\WF}(S_{n,0})$ for any
$0<\gamma<1.$ The facts that $ v_1\restrictedto_{bS_{n,0}}=0,$ and $
L_{\bzero}v_1\restrictedto_{bS_{n,0}}=\fone\restrictedto_{bS_{n,0}}$ follow
from a direct calculation. We first give the proof for the $n=2$ case:
Observe that
\begin{equation}
  v_1(x_1,x_2)=\fone(x_1,0)\eta(x_2)+\fone(0,x_2)\eta(x_1)-\fone(0,0)\eta(x_1+x_2).
\end{equation}
We restrict to $x_2=0$ to obtain
\begin{equation}
  v_1(x_1,0)=\fone(0,0)\eta(x_1)-\fone(0,0)\eta(x_1)=0.
\end{equation}
The case $x_1=0$ is identical. Applying $L_{\bzero}$ we see that
\begin{multline}
  L_{\bzero}v_1(x_1,x_2)=x_1\pa_{x_1}^2\fone(x_1,0)\eta(x_2)+\\
\fone(x_1,0)+x_2\pa_{x_2}^2\fone(0,x_2)\eta(x_1)+
\fone(0,x_2)-\fone(0,0).
\end{multline}
Setting $x_2=0$ now gives
\begin{equation}
  L_{\bzero}v_1(x_1,0)=\fone(x_1,0)+\fone(0,0)-\fone(0,0)=\fone(x_1,0).
\end{equation}

The general case is not much harder: By symmetry, and continuity it suffices to
show that
\begin{equation}\label{eqn3.23.01}
\begin{split}
  &\sum_{j=1}^n\sum_{\{1\leq i_1<\cdots<i_j\leq
    n\}}(-1)^{j-1}\fone(X_{\hi_1,\dots,\hi_j})
\eta(x_{i_1}+\cdots+x_{i_j})\restrictedto_{x_n=0}=0\quad\text{ and }\\
&\sum_{j=1}^n\sum_{\{1\leq i_1<\cdots<i_j\leq
    n\}}(-1)^{j-1}\left[L_{\bzero}\fone(X_{\hi_1,\dots,\hi_j})
\eta(x_{i_1}+\cdots+x_{i_j})+\fone(X_{\hi_1,\dots,\hi_j})\right]_{x_n=0}\\&=f(X_{\hn}).
\end{split}
\end{equation}
We prove the first identity in~\eqref{eqn3.23.01} by observing that, if
$i_j<n,$ then
$X_{\hi_1,\dots,\hi_j}\restrictedto_{x_n=0}=X_{\hi_1,\dots,\hi_j,\hn}.$ We
split the sum into two parts and use the fact that
$\eta(x_n)\restrictedto_{x_n=0}=0$ to obtain:
\begin{multline}\label{eqn3.24.01}
  \sum_{j=1}^n\sum_{\{1\leq i_1<\cdots<i_j\leq
    n\}}(-1)^{j-1}\fone(X_{\hi_1,\dots,\hi_j})
\eta(x_{i_1}+\cdots+x_{i_j})\restrictedto_{x_n=0}=\\
\sum_{j=1}^{n-1}\sum_{\{1\leq i_1<\cdots<i_j\leq
    n-1\}}(-1)^{j-1}\fone(X_{\hi_1,\dots,\hi_j,\hn})
\eta(x_{i_1}+\cdots+x_{i_j})+\\
\sum_{j=2}^n\sum_{\{1\leq i_1<\cdots<i_{j-1}\leq
    n-1\}}(-1)^{j-1}\fone(X_{\hi_1,\dots,\hi_{j-1},\hn})
\eta(x_{i_1}+\cdots+x_{i_{j-1}}).
\end{multline}
Upon changing $j-1\to j$ in the third line, it becomes clear that the second
and third lines in~\eqref{eqn3.24.01} differ only by a sign, and therefore sum
to zero.

To prove the second identity in~\eqref{eqn3.23.01}, we observe that the first
identity already implies that
\begin{equation}
  \sum_{j=1}^n\sum_{\{1\leq i_1<\cdots<i_j\leq
    n\}}(-1)^{j-1}L_{\bzero}\fone(X_{\hi_1,\dots,\hi_j})
\eta(x_{i_1}+\cdots+x_{i_j})\restrictedto_{x_n=0}=0,
\end{equation}
so we are only left to prove that
\begin{equation}
  \sum_{j=1}^n\sum_{\{1\leq i_1<\cdots<i_j\leq
    n\}}(-1)^{j-1}\fone(X_{\hi_1,\dots,\hi_j})\restrictedto_{x_n=0}=f(X_{\hn}).
\end{equation}
Applying the same decomposition as before, we see that
\begin{multline}
   \sum_{j=1}^n\sum_{\{1\leq i_1<\cdots<i_j\leq
    n\}}(-1)^{j-1}\fone(X_{\hi_1,\dots,\hi_j})\restrictedto_{x_n=0}=\\
 \fone(X_{\hn})+\sum_{j=1}^{n-1}\sum_{\{1\leq i_1<\cdots<i_j\leq
    n-1\}}(-1)^{j-1}\fone(X_{\hi_1,\dots,\hi_j,\hn})+\\
 \sum_{j=2}^n\sum_{\{1\leq i_1<\cdots<i_{j-1}\leq
    n-1\}}(-1)^{j-1}\fone(X_{\hi_1,\dots,\hi_{j-1},\hn}),
\end{multline}
from which the claim is again immediate.
\end{proof}

This almost suffices, except for the fact that $v_1$ is not generally compactly
supported even if the data is. This is because in the definition of $v_1$ we
evaluate $\fone$ at subsets of the variables $(x_1,\dots,x_n).$ To repair this
we multiply $v_1$ by a specially constructed bump function. To see that this
works we need  another elementary calculus lemma.
\begin{lemma}\label{lem3}
  Let $\psi\in\cC^2([0,\infty))$ and $h\in\cC^2(\Int(S_{n,0})),$ then
  \begin{multline}
    L_{\bzero}[h\psi(x_1+\cdots+x_n)]=\\\psi(x_1+\cdots+x_n) L_{\bzero}h+
2\psi'(x_1+\cdots+x_n) Rh+(x_1+\cdots+x_n)\psi''(x_1+\cdots+x_n)h.
  \end{multline}
Here $R=x_1\pa_{x_1}+\cdots+x_n\pa_{x_n}.$
\end{lemma}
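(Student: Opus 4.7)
The statement is an identity for $L_{\bzero}$ applied to a product of the form $h \cdot \psi(s)$, where $s = x_1+\cdots+x_n$. My plan is to prove it by direct computation using the product and chain rules, exploiting the fact that $\partial_{x_i} s = 1$ for every $i \in \{1,\dots,n\}$.

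First I would compute the first derivative: for each $i$,
\begin{equation*}
\partial_{x_i}[h\,\psi(s)] = (\partial_{x_i} h)\,\psi(s) + h\,\psi'(s).
\end{equation*}
Applying $\partial_{x_i}$ once more and again using $\partial_{x_i} s = 1$, I get
\begin{equation*}
\partial_{x_i}^2[h\,\psi(s)] = (\partial_{x_i}^2 h)\,\psi(s) + 2(\partial_{x_i} h)\,\psi'(s) + h\,\psi''(s).
\end{equation*}

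Next I would multiply by $x_i$ and sum over $i = 1,\dots,n$, grouping the three resulting pieces. The first sum gives $\psi(s)\sum_i x_i\partial_{x_i}^2 h = \psi(s)\,L_{\bzero}h$. The second gives $2\psi'(s)\sum_i x_i\partial_{x_i} h = 2\psi'(s)\,Rh$ by the definition of $R$. The third gives $\psi''(s)\, h \sum_i x_i = s\,\psi''(s)\,h$, which is exactly $(x_1+\cdots+x_n)\psi''(x_1+\cdots+x_n)\,h$. Assembling these three terms yields the claimed identity.

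There is essentially no obstacle here: the whole argument is two applications of the product rule followed by a clean rearrangement. The only thing to be slightly careful about is keeping track of the chain rule factor $\partial_{x_i} s = 1$, which is what makes the formula so tidy (in particular no cross-variable terms of the form $x_i \psi''(s)\partial_{x_j}h$ appear). The identity can be viewed as a variable-coefficient analogue of Lemma~\ref{lem1}, to which it reduces when $h \equiv 1$.
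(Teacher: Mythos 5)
Your computation is correct and complete: the paper states Lemma~\ref{lem3} without proof (calling it ``another elementary calculus lemma''), and the direct product-rule calculation you give, using $\partial_{x_i}s=1$ for all $i$, is exactly the intended elementary argument. No gaps.
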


Using this lemma we can easily demonstrate the following result:
\begin{proposition}
  Suppose that $\fone\in\cC^{0,2+\gamma}_{\WF}(S_{n,0})$ is supported in the set $\{x:\: x_1+\cdots+x_n\leq N\}.$
  If $\psi\in\cC^2_c([0,\infty))$ equals $1$ in $[0,N],$ then
  \begin{equation}
    \tv_1(x)=\psi(x_1+\cdots+x_n)v_1(x)
  \end{equation}
is compactly supported, $\tv_1$ and $L_{\bzero}\tv_1$ belong to
$\cC^{0,\gamma}_{\WF}(S_{n,0}).$ The function $\tv_1$ also satisfies the equations:
 \begin{equation}
    L_{\bzero}\tv_1\restrictedto_{bS_{n,0}}=\fone\restrictedto_{bS_{n,0}}\text{ and\;\; }
\tv_1\restrictedto_{bS_{n,0}}=0.
  \end{equation}
\end{proposition}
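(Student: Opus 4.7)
The plan is to verify the five assertions---compact support, the two $\cC^{0,\gamma}_{\WF}$ regularity claims, and the two boundary identities---by combining Theorem~\ref{thm1}, Lemma~\ref{lem3}, and the hypothesis that $\supp\fone$ is contained in the region where $\psi\equiv 1.$ Compact support of $\tv_1$ is immediate, since $\psi(x_1+\cdots+x_n)$ vanishes once $x_1+\cdots+x_n$ exceeds the top of $\supp\psi.$ The regularity $\tv_1\in\cC^{0,\gamma}_{\WF}(S_{n,0})$ follows because Theorem~\ref{thm1} gives $v_1\in\cC^{0,\gamma}_{\WF,\loc}(S_{n,0})$ and $\psi$ composed with a smooth function is smooth, so their product is locally in the same space and, by the compact support, globally in it.

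Next I would apply Lemma~\ref{lem3} with $h=v_1$ to get
\[
L_{\bzero}\tv_1=\psi\,L_{\bzero}v_1+2\psi'\,Rv_1+(x_1+\cdots+x_n)\psi''\,v_1,
\]
where the arguments of $\psi,\psi',\psi''$ are $x_1+\cdots+x_n.$ Theorem~\ref{thm1} supplies $L_{\bzero}v_1\in\cC^{0,\gamma}_{\WF,\loc}.$ Inspecting~\eqref{singprt0} term by term, applying $R=\sum_l x_l\pa_{x_l}$ to a typical summand $\fone(X_{\hi_1,\dots,\hi_j})\eta(x_{i_1}+\cdots+x_{i_j})$ produces a smooth multiple of $\eta(x_{i_1}+\cdots+x_{i_j})$ together with $\fone(X_{\hi_1,\dots,\hi_j})\,(x_{i_1}+\cdots+x_{i_j})\bigl(1+\log(x_{i_1}+\cdots+x_{i_j})\bigr),$ each of which shares the WF-H\"older regularity of $\eta$ itself; multiplying by the compactly supported cutoffs $\psi'$ and $\psi''$ of $x_1+\cdots+x_n$ preserves $\cC^{0,\gamma}_{\WF}$ and yields compact support.

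To verify the boundary identities, $\tv_1|_{bS_{n,0}}=\psi\cdot v_1|_{bS_{n,0}}=0$ comes directly from Theorem~\ref{thm1}. On a face $\{x_k=0\},$ the tangential derivatives $\pa_{x_l}v_1|_{x_k=0}$ ($l\neq k$) vanish because $v_1$ is identically zero on the face, and the radial contribution $x_k\pa_{x_k}v_1$ tends to zero as $x_k\to 0$ since the only singularity of $\pa_{x_k}v_1$ in $x_k$ is logarithmic (from $\pa_{x_k}\eta$) and $\tau\log\tau\to 0.$ Hence $Rv_1|_{x_k=0}=0,$ and the third term in the expansion also vanishes because $v_1|_{x_k=0}=0,$ leaving
\[
L_{\bzero}\tv_1|_{x_k=0}=\psi(x_1+\cdots+x_n)\big|_{x_k=0}\cdot\fone|_{x_k=0}.
\]
Since $\supp\fone\subset\{x_1+\cdots+x_n\leq N\}$ and $\psi\equiv 1$ on $[0,N],$ we have $\psi\fone=\fone$ pointwise, so $L_{\bzero}\tv_1|_{bS_{n,0}}=\fone|_{bS_{n,0}}.$

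The main obstacle I anticipate is the careful verification that $Rv_1$ vanishes on each boundary face; this hinges on the interplay between the ``hat'' and ``un-hat'' indices in~\eqref{singprt0} and on the fact that $\eta(\tau)=\tau\log\tau$ is sub-Lipschitz at $\tau=0,$ which lets the radial factor $x_k$ absorb the logarithmic derivative of the restricted summands. Everything else reduces to support bookkeeping: the cutoff $\psi$ is constructed to be invisible wherever $\fone$ lives, so it does not spoil the identity $L_{\bzero}\tv_1=\fone$ along the boundary.
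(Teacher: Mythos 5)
Your proof is correct and follows essentially the same route as the paper's: both apply Lemma~\ref{lem3} with $h=v_1$, invoke Theorem~\ref{thm1} for the $\psi\, L_{\bzero}v_1$ term and for the vanishing of $v_1$ on $bS_{n,0}$ (which kills the third term), and reduce the remaining work to showing $Rv_1\restrictedto_{bS_{n,0}}=0$. For that last sub-step the paper writes, near the interior of a hypersurface face, $v_1=x_1\log x_1\,a_1(x)+x_1 a_2(x)$ with $a_1,a_2$ locally regular and applies $R$ to this, while you split $R$ into tangential and normal parts and treat them separately; your version reaches the same conclusion but leans implicitly on the tangential $\cC^1$-regularity of $v_1$ up to the face (needed to conclude $\pa_{x_l}v_1\restrictedto_{x_k=0}=0$ from $v_1\equiv 0$ on the face), a point the paper's explicit factorization makes transparent.
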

\begin{proof}
The fact that $\tv_1\restrictedto_{bS_{n,0}}=0$ and its regularity properties are immediate. Applying
Lemma~\ref{lem3} we see that
 \begin{equation}
    L_{\bzero}\tv_1=\psi(x_1+\cdots+x_n)L_{\bzero}v_1+\
2\psi'(x_1+\cdots+x_n) Rv_1+(x_1+\cdots+x_n)\psi''(x_1+\cdots+x_n)v_1.
\end{equation}
The fact that $\tau\pa_{\tau}\eta(\tau)=\tau(\log\tau+1)$ implies that $Rv_1$
has the same regularity as $v_1,$ and therefore so does $L_{\bzero}\tv_1.$ Thus to prove
the proposition we only need to show that $Rv_1$ vanishes on the interiors of
the hypersurface boundary components. In a neighborhood of the interior of
$x_1=0$ we have the representation $v_1=x_1\log x_1a_1(x)+x_1a_2(x),$ where $a_1$ and $a_2$ belong
locally to $\cC^2.$ Applying the vector field we see that
\begin{equation}
  Rv_1= x_1(\log x_1+1)a_1(x)+x_1\log x_1Ra_1(x)+x_1(a_2(x)+Ra_2),
\end{equation}
which obviously vanishes as $x_1\to 0.$ The other boundary faces follow by an
essentially identical argument.
\end{proof}

We now can complete the solution of~\eqref{dirprob00} and~\eqref{dirprob0} as
well. Write $v=\tv_0+\tv_1,$ where
\begin{equation}\label{eqn6.36}
  L_{\bzero}\tv_0=\fone- L_{\bzero}\tv_1\text{\; and\; }\tv_0\restrictedto_{bS_{n.0}}=0.
\end{equation}
Since $\fone- L_{\bzero}\tv_1\in\cC^{0,\gamma}_{\WF}(S_{n,0})$ is compactly
supported and vanishes on the boundary, it follows from the results
in~\cite{EpMaz2} that~\eqref{eqn6.36} has a unique solution
$\tv_0\in\cC^{0,2+\gamma}_{\WF}(S_{n,0}).$ While we call $\tv_0$ the regular
part of the solution, it will not in general be smooth. It will have
complicated singularities of the form
$[x_{i_1}+\cdots+x_{i_k}]^l[\log(x_{i_1}+\cdots+x_{i_k})]^m,$ for $2\leq l,$
and with the maximum value of $m$ bounded by a function of $l.$

With  small adaptations this method can also be used to treat the case of the
$n$-simplex, $\Sigma_n,$ and the neutral Kimura diffusion operator, $\LK.$
As noted above, the $n$-simplex is most symmetrically viewed in the affine plane
$x_1+\cdots+x_{n+1}=1.$ This representation makes clear that every vertex is
identical to every other vertex. For the construction of the solution to the
Dirichlet problem:
\begin{equation}\label{eqn6.37}
  \LK u=f\text{ in }\Sigma_n\text{\; with\; }u\restrictedto_{b\Sigma_n}=g
\end{equation}
it turns out to be simplest to work in the non-symmetric representation, with
one vertex identified with $\bzero.$ Recall that
\begin{equation}
  \tSigma_n=\{(x_1,\dots,x_n):\:0\leq x_i,\, i=1,\dots,n\,;\, x_1+\cdots+x_n\leq 1\}.
\end{equation}
It is clear that there is a choice of linear projection into $\bbR^n$ so that
any given vertex of $\Sigma_n$ is so identified with $\bzero.$ For the moment
we work in $\tSigma_n$ with coordinates $(x_1,\dots,x_n).$

We begin by assuming that the data $f,g$ is supported in a set of the form
$\tSigma_{n,2\epsilon}=\{x:\: 0\leq x_1+\cdots+x_n\leq 1-2\epsilon\},$ for some $0<\epsilon.$ As
before we let $\tg$ denote an  optimally smooth extension of $g$ as a
function with support in $\tSigma_{n,\epsilon}.$ We write $u=\tg+v,$ where $v$ satisfies
\begin{equation}\label{eqn3.32}
  \LK v=f-\LK\tg=\fone,\text{ with\; }v\restrictedto_{b\tSigma_n}=0.
\end{equation}
As before we write $v=v_0+\tv_1,$ where
\begin{equation}
  \tv_1=\psi(x_1+\dots+x_n)v_1(x),
\end{equation}
with $\psi\in\cC^{\infty}_{c}([0,1-\frac{\epsilon}{2})),$ satisfying $\psi(\tau)=1,$
for $\tau\in [0,1-\epsilon].$ The use of the bump function is much
more critical here, as we do not have good control on the function $v_1$ on
the face where $x_1+\cdots+x_n=1.$  

 The function $v_1$ is defined by the sum:
\begin{equation}\label{singprtsplx}
  v_1(x)=\sum_{j=1}^n\sum_{\{1\leq i_1<\cdots<i_j\leq
    n\}}(-1)^{j-1}\fone(X_{\hi_1,\dots,\hi_j})
\eta(x_{i_1}+\cdots+x_{i_j}).
\end{equation}
What is special about the choice of coordinates is the possibility of having
the two functions, $\fone(X_{\hi_1,\dots,\hi_j})$ and $\eta(x_{i_1}+\cdots+x_{i_j}),$
depend on disjoint sets of coordinates. Elementary calculations show that
\begin{equation}
  \LK \eta(x_{i_1}+\cdots+x_{i_j})=1-(x_{i_1}+\cdots+x_{i_j}),
\end{equation}
and
\begin{multline}\label{eqn3.35}
  \LK [\fone(X_{\hi_1,\dots,\hi_j})\eta(x_{i_1}+\cdots+x_{i_j})]=
\eta(x_{i_1}+\cdots+x_{i_j})\LK \fone(X_{\hi_1,\dots,\hi_j})-\\
2(x_{i_1}+\cdots+x_{i_j})\eta'(x_{i_1}+\cdots+x_{i_j})
R_{\hi_1,\dots,\hi_j}\fone(X_{\hi_1,\dots,\hi_j})
+\\
+(1-(x_{i_1}+\cdots+x_{i_j}))\fone(X_{\hi_1,\dots,\hi_j}),
\end{multline}
where the vector fields $R_{\hi_1,\dots,\hi_j}$ are defined by
\begin{equation}
  R_{\hi_1,\dots,\hi_j}=\sum_{k\notin\{i_i,\dots,i_j\}}x_k\pa_{x_k}.
\end{equation}

Let $b\tSigma_n'=b\tSigma_n\setminus\{x:\: x_1+\cdots+x_n=1\}.$ Arguing as
before we show that $v_1$ satisfies the equation along $b\tSigma_n'.$
\begin{theorem} Let $\fone\in\cC^{0,2+\gamma}_{\WF}(\Sigma_n).$ The function
  $v_1$ defined in~\eqref{singprtsplx} belongs to
  $\cC^{0,\gamma}_{\WF}(\tSigma_n),$ as does $\LK v_1.$ It satisfies the
  following equations
  \begin{equation}\label{smplxeqns}
    \LK v_1\restrictedto_{b\tSigma_n'}=\fone\restrictedto_{b\tSigma_n'}\text{ and\;\; }
v_1\restrictedto_{b\tSigma_n'}=0.
  \end{equation}
\end{theorem}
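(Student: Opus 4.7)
My plan is to closely mirror the proof of Theorem~\ref{thm1}, handling the two structural differences between $\LK$ and $L_{\bzero}$: the identity $\LK\eta(x_{i_1}+\cdots+x_{i_j}) = 1-(x_{i_1}+\cdots+x_{i_j})$ (replacing Lemma~\ref{lem2}), and the presence of the cross term with the radial vector field $R_{\hi_1,\dots,\hi_j}$ in the product-rule expansion~\eqref{eqn3.35}. The fact that $\eta(\tau)=\tau\log\tau$ lies locally in $\cC^{0,\gamma}_{\WF}$ for any $\gamma\in(0,1)$, combined with the hypothesis $\fone\in\cC^{0,2+\gamma}_{\WF}(\Sigma_n)$, immediately gives $v_1\in\cC^{0,\gamma}_{\WF}(\tSigma_n)$: each summand of~\eqref{singprtsplx} is a product of smooth factors with $\eta$ applied to disjoint coordinate groups. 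Using~\eqref{eqn3.35} term by term and noting that $\tau\eta'(\tau)=\tau\log\tau+\tau=\eta(\tau)+\tau$, every contribution to $\LK v_1$ is again in $\cC^{0,\gamma}_{\WF}(\tSigma_n)$.

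The boundary identities are symmetric in $(x_1,\dots,x_n)$ since $b\tSigma_n'$ omits the single non-symmetric face $\{x_1+\cdots+x_n=1\}$; hence it suffices to verify them on the face $\{x_n=0\}$. For $v_1|_{x_n=0}=0$, I reuse essentially verbatim the inclusion-exclusion argument in the proof of Theorem~\ref{thm1}: terms with $n\notin\{i_1,\dots,i_j\}$ restrict to $\fone(X_{\hi_1,\dots,\hi_j,\hn})\eta(x_{i_1}+\cdots+x_{i_j})$, those with $n\in\{i_1,\dots,i_j\}$ (necessarily $n=i_j$) restrict to $\fone(X_{\hi_1,\dots,\hi_{j-1},\hn})\eta(x_{i_1}+\cdots+x_{i_{j-1}})$ (using $\eta(0)=0$ to kill the singleton $\{n\}$ term), and the two families cancel under the shift $j-1\to j$ because of opposite signs. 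For $\LK v_1|_{x_n=0}=\fone|_{x_n=0}$, I decompose $\LK v_1$ via~\eqref{eqn3.35} into three pieces $A$, $B$, $C$. Piece $A$, containing $\eta\cdot\LK\fone$, vanishes on $\{x_n=0\}$ by that same inclusion-exclusion applied with $\LK\fone$ in place of $\fone$.

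Pieces $B$ and $C$ are the heart of the proof. For $B$, I split by whether $n\in\{i_1,\dots,i_j\}$: if $n\notin\{i_l\}$, then $R_{\hi_1,\dots,\hi_j}$ contains the summand $x_n\pa_{x_n}$ which vanishes at $x_n=0$, leaving $R_{\hi_1,\dots,\hi_j,\hn}\fone(X_{\hi_1,\dots,\hi_j,\hn})$; if $n=i_j$, the vector field already omits $\pa_{x_n}$, and the prefactor $(x_{i_1}+\cdots+x_{i_j})\eta'(x_{i_1}+\cdots+x_{i_j})$ simply loses its $x_n$ summand (the $j=1$ singleton term contributes zero by continuity, since $x_n\eta'(x_n)\to0$). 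The two families cancel in pairs under $j-1\to j$, giving $B|_{x_n=0}=0$. For $C$, I split $(1-(x_{i_1}+\cdots+x_{i_j}))\fone = \fone - (x_{i_1}+\cdots+x_{i_j})\fone$; the first piece contributes $\fone(X_{\hn})$ by the Theorem~\ref{thm1} argument, while the second vanishes by another identical index-shift cancellation. Summing, $\LK v_1|_{x_n=0}=0+0+\fone(X_{\hn})=\fone|_{x_n=0}$. The main obstacle is the combinatorial bookkeeping in cancelling $B$ and the second piece of $C$, which re-runs the same index-shift cancellation twice more in the presence of the additional $\tau$ or $R$ factor; everything else reduces to the observation (exploited throughout) that $\fone(X_{\hi_1,\dots,\hi_j})$ and any function of $x_{i_1}+\cdots+x_{i_j}$ depend on disjoint coordinate subsets, which makes the product rule for $\LK$ collapse to~\eqref{eqn3.35}.
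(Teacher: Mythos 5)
Your proposal is correct and follows essentially the same route as the paper's proof: decompose $\LK v_1$ via~\eqref{eqn3.35}, treat the $\eta\cdot\LK\fone$ line by the Theorem~\ref{thm1} inclusion-exclusion, kill the cross-term line by the $j\mapsto j-1$ index shift after dropping the singleton $\{n\}$ term (using $x_n\eta'(x_n)\to0$), and show the zeroth-order line contributes exactly $\fone(X_{\hn})$. The only cosmetic difference is in the last piece: you split $1-(x_{i_1}+\cdots+x_{i_j})$ into $1$ and $-(x_{i_1}+\cdots+x_{i_j})$ and cancel each separately, whereas the paper runs the same shift with the full coefficient and notes the surviving $j=1,\, i_1=n$ term has prefactor $(1-x_n)\to 1$; both are correct and equivalent in substance.
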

\begin{proof}
  The regularity statements for $v_1$ and $\LK v_1$ follow easily
  from~\eqref{singprtsplx} and~\eqref{eqn3.35}. The fact that
  $v_1\restrictedto_{b\tSigma_n'}=0$ follows from Theorem~\ref{thm1}. The proof
  that $\LK v_1\restrictedto_{b\tSigma_n'}=\fone\restrictedto_{b\tSigma_n'}$ is
  similar to the proof of Theorem~\ref{thm1}. As before continuity shows that
  we only need to prove this statement for the interiors of the hypersurface
  faces, and symmetry shows that it suffices to consider $\{x_n=0\}.$ The terms
  coming from the first line of~\eqref{eqn3.35} can be treated exactly as
  before. For the terms from the third line of~\eqref{eqn3.35}, the sole
  difference is the coefficient $(1-x_n),$ which equals $1$ where $x_n=0.$ The
  first order cross terms, coming from the second line of~\eqref{eqn3.35},
  require some additional consideration.

We once again use the observation that if
$i_j<n,$ then
$X_{\hi_1,\dots,\hi_j}\restrictedto_{x_n=0}=X_{\hi_1,\dots,\hi_j,\hn},$ as well
as the facts that
$$R_{\hi_1,\dots,\hi_j}\fone(X_{\hi_1,\dots,\hi_j})\restrictedto_{x_n=0}=
R_{\hi_1,\dots,\hi_j,\hn}\fone(X_{\hi_1,\dots,\hi_j,\hn}),$$ 
and 
$$x_n\pa_{x_n}\eta(x_n)\restrictedto_{x_n=0}=(x_n\log
x_n+x_n)\restrictedto_{x_n=0}=0,$$ 
to split the sum into two parts, obtaining:
\begin{multline}\label{eqn3.24.05}
  \sum_{j=1}^n\sum_{\{1\leq i_1<\cdots<i_j\leq
    n\}}(-1)^{j-1}2(x_{i_1}+\cdots+x_{i_j})\eta'(x_{i_1}+\cdots+x_{i_j})
R_{\hi_1,\dots,\hi_j}\fone(X_{\hi_1,\dots,\hi_j})\restrictedto_{x_n=0}
=\\
\sum_{j=1}^{n-1}\sum_{\{1\leq i_1<\cdots<i_j\leq
    n-1\}}(-1)^{j-1}2(x_{i_1}+\cdots+x_{i_j})\eta'(x_{i_1}+\cdots+x_{i_j})
R_{\hi_1,\dots,\hi_j,\hn}\fone(X_{\hi_1,\dots,\hi_j,\hn})+\\
\sum_{j=2}^n\sum_{\{1\leq i_1<\cdots<i_{j-1}\leq
    n-1\}}(-1)^{j-1}2(x_{i_1}+\cdots+x_{i_j})\eta'(x_{i_1}+\cdots+x_{i_j})
R_{\hi_1,\dots,\hi_j,\hn}\fone(X_{\hi_1,\dots,\hi_j,\hn}).
\end{multline}
As before, after changing variables in the third line with $j\mapsto j-1,$
we see that the  second and third lines differ only by a sign, demonstrating
that these terms sum to zero along the face given by $\{x_n=0\}.$ This
completes the proof of the theorem.
\end{proof}

To complete the discussion of this case we need to check that $\tv_1$ also
satisfies the equations in~\eqref{smplxeqns}. An elementary calculation shows
that
\begin{multline}
  \LK[v_1\psi(x_1+\cdots+x_n)]=\\
\psi(x_1+\cdots+x_n)\LK
  v_1+\psi'(x_1+\cdots+x_n)\tR v_1+
v_1\LK\psi,
\end{multline}
where
\begin{equation}
  \tR = 2(1-(x_1+\cdots+x_n))\sum_{j=1}^nx_i\pa_{x_i}.
\end{equation}
As before, $\tR$ is tangent to $b\tSigma_n,$ and
$\tR v_1\in\cC^{0,\gamma}_{\WF}(\tSigma_n).$ It is easy to see that 
\begin{equation}
  \tR v_1\restrictedto_{b\tSigma_n'}=0.
\end{equation}
Summarizing, we have shown
\begin{proposition} Let $\fone\in\cC^{0,2+\gamma}_{\WF}(\Sigma_n).$ 
  The function $\tv_1$ belongs to $\cC^{0,\gamma}_{\WF}(\tSigma_n),$ as does
  $\LK \tv_1.$ If $\psi(x_1+\cdots+x_n)=1$ on
  $\supp\fone\subset\tSigma_{n,\epsilon},$ for an $\epsilon>0,$ then $\tv_1$
  satisfies the following equations
  \begin{equation}\label{smplxeqns2}
    \LK \tv_1\restrictedto_{b\tSigma_n}=\fone\restrictedto_{b\tSigma_n}\text{ and\;\; }
\tv_1\restrictedto_{b\tSigma_n}=0.
\end{equation}
\end{proposition}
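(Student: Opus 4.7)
The plan is to assemble this from two ingredients already in hand: the theorem on $v_1$ from the previous subsection, and the explicit product identity for $\LK[v_1\psi(x_1+\cdots+x_n)]$ displayed immediately above the proposition. The regularity assertions come essentially for free. Since $\psi(x_1+\cdots+x_n)$ is smooth and compactly supported on $\tSigma_n,$ the containment $\tv_1 \in \cC^{0,\gamma}_{\WF}(\tSigma_n)$ is immediate from the corresponding statement for $v_1.$ For $\LK\tv_1$ I would simply read off from the product formula that it is a sum of a smooth bounded function times one of $\LK v_1,$ $\tR v_1,$ or $v_1,$ each of which was just shown to lie in $\cC^{0,\gamma}_{\WF}(\tSigma_n).$

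For the boundary identities I would split $b\tSigma_n$ into the ``near'' piece $b\tSigma_n' = b\tSigma_n\setminus\{x:\, x_1+\cdots+x_n=1\}$ and the ``far'' face $F = \{x:\, x_1+\cdots+x_n=1\}.$ On $b\tSigma_n'$ the preceding theorem supplies $v_1\restrictedto_{b\tSigma_n'}=0$ and $\LK v_1\restrictedto_{b\tSigma_n'}=\fone\restrictedto_{b\tSigma_n'},$ while the remark just before the proposition gives $\tR v_1\restrictedto_{b\tSigma_n'}=0.$ Substituting into the product formula kills the second and third summands and reduces $\LK\tv_1$ on $b\tSigma_n'$ to $\psi(x_1+\cdots+x_n)\fone(x).$ Since by hypothesis $\psi = 1$ on $\supp\fone,$ this product equals $\fone$ pointwise throughout $\tSigma_n$: on $\supp\fone$ the factor of $\psi$ is $1,$ and off $\supp\fone$ the factor of $\fone$ is zero. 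Thus $\LK\tv_1\restrictedto_{b\tSigma_n'}=\fone\restrictedto_{b\tSigma_n'}$ and $\tv_1\restrictedto_{b\tSigma_n'}=\psi\cdot 0 = 0.$

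On $F$ the argument is cleaner still. Because $\psi \in \cC^{\infty}_c([0,1-\epsilon/2)),$ the function $\psi(x_1+\cdots+x_n)$ together with all of its derivatives vanishes identically in an open neighborhood of $F$ inside $\tSigma_n.$ Hence $\tv_1 \equiv 0$ and $\LK\tv_1 \equiv 0$ in that neighborhood; and the support hypothesis $\supp\fone \subset \tSigma_{n,\epsilon}$ also forces $\fone\restrictedto_F = 0,$ so both required identities hold trivially on $F.$

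I do not expect a genuine obstacle here — the verification is essentially bookkeeping — but if I had to single out the delicate point it is this: the preceding theorem supplies no control on $v_1$ near the far face $F,$ where the explicit singular sum defining $v_1$ can behave badly. The cut-off $\psi$ was introduced precisely to shield this face from the analysis, and the real content of the proof is the confirmation that inserting this cut-off does not disturb the boundary identities already established on the faces where $\psi \equiv 1.$
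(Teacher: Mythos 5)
Your proposal is correct and follows essentially the same route the paper takes: the paper establishes the product identity for $\LK[v_1\psi(x_1+\cdots+x_n)]$, notes $\tR v_1\restrictedto_{b\tSigma_n'}=0$, and then presents the proposition as a summary; you substitute the known facts about $v_1$, $\LK v_1$, and $\tR v_1$ on $b\tSigma_n'$ into the product formula exactly as intended. The one place you spell out more explicitly than the paper does is the far face $F=\{x_1+\cdots+x_n=1\}$, where you correctly observe that $\psi$ vanishes identically (with all derivatives) in a neighborhood of $F$ while $\fone\restrictedto_F=0$ by the support hypothesis, so both identities hold trivially there — a detail the paper leaves implicit but which is precisely why the cut-off was introduced.
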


We can now complete the solution of~\eqref{eqn3.32}. We write $v=v_0+\tv_1,$
the function $v_0$ must satisfy
\begin{equation}\label{eq:v0:correction}
  \LK v_0=\fone-\LK\tv_1\text{\; with\; }v_0\restrictedto_{b\Sigma_n}=0.
\end{equation}
The existence of a solution $v_0\in\cC^{0,2+\gamma}_{\WF}(\tSigma_n),$ which can
be taken to vanish on the boundary, follows from the results in~\cite{EpMaz2}. 
Numerically, the method of Section~\ref{sec:numerics} (with $k=n$) can
  be used to solve (\ref{eq:v0:correction}) for $v_0$.

Finally if we have general data $(f,g)$ so that $f,$ and $\LK\tg\in\cC^{0,2+\gamma}_{\WF}(\Sigma_n),$ then
we choose a partition of unity $\{\varphi_1,\dots,\varphi_{n+1}\}$ so
that the function $\varphi_j$ equals $1$ in a neighborhood of the
vertex $e_j\subset\Sigma_n\subset\bbR^{n+1},$ and vanishes in a
neighborhood of the opposite face, where $x_j=0.$ The data
$\{\varphi_j(f,\tg):\: j=1,\dots,n+1\}$ satisfies the hypotheses used
above, and therefore we can construct solutions $\{u_j\}$ to the
equations
\begin{equation}
  \LK u_j=\varphi_j f\text{ on }\Sigma_n,\text{ with\; }u_j\restrictedto_{b\Sigma_n}=\varphi_j g,
\text{\; for\, }j=1,\dots,n+1.
\end{equation}
To construct the solution in a neighborhood of $e_j\in\bbR^{n+1}$ we project the simplex
into the hyperplane $\{x_j=0\},$ and use the projective representation where
$e_j$ corresponds to $\bzero.$ 

Now setting
\begin{equation}
  u=u_1+\cdots+u_{n+1},
\end{equation}
we obtain a solution to the original boundary value problem
\begin{equation}\label{eqn4.51}
  \LK u=f\text{ on }\Sigma_n\text{\; with\; }u\restrictedto_{b\Sigma_n}=g.
\end{equation}
This proves the following general result:
\begin{theorem}
  Let $f\in\cC^{0,2+\gamma}_{\WF}(\Sigma_n),$ and $g$ have an extension $\tg$
  to $\Sigma_n$ so that
  $\LK\tg\in\cC^{0,2+\gamma}_{\WF}(\Sigma_n).$ The Dirichlet
  problem~\eqref{eqn4.51} has unique solution $u,$ which takes the form
  $u=\tg+u^{(0)}+u^{(1)},$ where $u^{(0)}\in\cC^{0,2+\gamma}_{\WF}(\Sigma_n),$
  vanishes on the boundary. The singular part, $u^{(1)}$ is of the
  form
\begin{equation}\label{eqn3.48}
  u^{(1)}(x)=\sum_{i=1}^{n+1}\sum_{j=1}^n
\sum_{\{1\leq i_1<\cdots<i_j\leq
    n+1:\:i_m\neq i\}}(-1)^{j-1}F(X_{\{\hi_1,\dots,\hi_j\}})\eta(x_{i_1}+\cdots+x_{i_j})
\end{equation}
Here $F$ is a function constructed from the pullbacks of $\{\varphi_j\fone\}$
to the affine model $\Sigma_n.$
\end{theorem}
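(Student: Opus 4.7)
The plan is to reduce the global Dirichlet problem on $\Sigma_n$ to $n+1$ localized problems, one per vertex, each of which is already handled by the preceding proposition, and then sum. The key point, enabled by Sato's theorem and by the permutation symmetry of $\LK$ in the affine model, is that every vertex $e_j$ admits a projective chart (the one that identifies $e_j$ with $\bzero\in\bbR^n$) in which the previous construction applies verbatim, and the data $\varphi_j f$ is concentrated near that vertex in that chart.

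First, I would fix the partition of unity $\{\varphi_j\}_{j=1}^{n+1}$ described just above the theorem, with $\varphi_j\equiv 1$ near $e_j$ and $\varphi_j\equiv 0$ in a neighborhood of the opposite face $\{x_j=0\}$, and set $f_j=\varphi_j f$, $\tg_j=\varphi_j\tg$, $\fone^{(j)}=f_j-\LK\tg_j$. In the projective chart centered at $e_j$, the pair $(f_j,\tg_j)$ is supported in $\tSigma_{n,\epsilon}$ for some $\epsilon>0$, and the hypothesis $\LK\tg\in\cC^{0,2+\gamma}_{\WF}(\Sigma_n)$ together with $f\in\cC^{0,2+\gamma}_{\WF}(\Sigma_n)$ gives $\fone^{(j)}\in\cC^{0,2+\gamma}_{\WF}(\Sigma_n)$ with the required support. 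The preceding proposition then produces $\tv_1^{(j)}$ as the cutoff of formula~\eqref{singprtsplx} applied to $\fone^{(j)}$, and the solvability result in~\cite{EpMaz2} produces a regular correction $v_0^{(j)}\in\cC^{0,2+\gamma}_{\WF}(\Sigma_n)$ vanishing on $b\Sigma_n$ and solving~\eqref{eq:v0:correction} for this piece of data. Setting $u_j=\tg_j+\tv_1^{(j)}+v_0^{(j)}$ gives $\LK u_j=f_j$ and $u_j\restrictedto_{b\Sigma_n}=g_j$.

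Second, setting $u=u_1+\cdots+u_{n+1}$ and using $\sum_j\varphi_j\equiv 1$, one immediately reads off $\LK u=f$ and $u\restrictedto_{b\Sigma_n}=g$, and obtains the decomposition by defining $u^{(0)}=\sum_j v_0^{(j)}\in\cC^{0,2+\gamma}_{\WF}(\Sigma_n)$ (vanishing on the boundary since each summand does) and $u^{(1)}=\sum_j \tv_1^{(j)}$. Unpacking~\eqref{singprtsplx} in each projective chart, where the coordinates in use are indexed by $\{1,\dots,n+1\}\setminus\{i\}$ when the origin represents the vertex $e_i$, gives exactly the triple sum~\eqref{eqn3.48}, with $F$ the function built chart-by-chart from the pullbacks of $\varphi_i\fone^{(i)}$. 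Uniqueness follows from the maximum principle for $\LK$: the difference $w$ of two solutions lies in $\cC^0(\Sigma_n)\cap\cC^2(\Int\Sigma_n)$, solves $\LK w=0$ in the interior, and vanishes on $b\Sigma_n$, hence $w\equiv 0$ by the weak maximum principle available for Kimura-type operators.

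The main obstacle is the bookkeeping required to assemble the local formulas~\eqref{singprtsplx} from $n+1$ different projective charts into the single triple sum~\eqref{eqn3.48}, together with the verification that in each chart the cutoff $\psi(x_1+\cdots+x_n)$ does not corrupt the boundary identities on the face opposite $e_j$ (the one that is ``at infinity'' from the viewpoint of chart $j$). This is precisely where the support property of $\varphi_j$ becomes essential: because $\varphi_j$ vanishes in a neighborhood of $\{x_j=0\}$, the datum $\fone^{(j)}$ is supported away from that face, so $\tv_1^{(j)}$ is identically zero there and the previous proposition applies without modification. All remaining technical points, namely the $\cC^{0,\gamma}_{\WF}$-regularity of $u^{(1)}$ and the solvability of~\eqref{eq:v0:correction} to produce $u^{(0)}\in\cC^{0,2+\gamma}_{\WF}(\Sigma_n)$ vanishing on $b\Sigma_n$, have already been reduced to the cited results from~\cite{EpMaz2}.
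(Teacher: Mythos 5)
Your proposal is correct and follows essentially the same route as the paper: a partition of unity $\{\varphi_j\}$ subordinate to the vertices, localization in the projective chart centered at each $e_j$ (where the preceding proposition applies because $\supp(\varphi_j f,\varphi_j\tg)$ lies in $\tSigma_{n,\epsilon}$), summation of the resulting $u_j=\tg_j+\tv_1^{(j)}+v_0^{(j)}$, and uniqueness via the maximum principle. The paper's own proof is terser --- it simply asserts the localized data satisfies the earlier hypotheses and that everything except uniqueness has already been established --- so your more explicit bookkeeping of $\fone^{(j)}$, the support property, and the assembly of $u^{(0)}$ and $u^{(1)}$ is a faithful and slightly more detailed rendering of the same argument.
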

\begin{remark}
  The solution $u$ to the boundary value problem in~\eqref{eqn4.51} also has an
  explicit 2-term expansion at the boundary similar to that given
  in~\eqref{eqn6.19}:
  \begin{equation}\label{eqn6.19.2}
    u(x)=\tg(x)+u^{(1)}(x)+O(\dist(x,b\Sigma_n)).
  \end{equation}
\end{remark}
\begin{proof}
Everything has been proved except the uniqueness statement.
 This follows from the maximum principle and the facts that $u$ is
continuous in the closed simplex, and $\LK$ is a strongly elliptic operator in
the interior of the simplex.
\end{proof}

We observe that $u$ has, in some sense, very complicated singularities, in that
it includes a smooth function times
$(x_{i_1}+\cdots+x_{i_j})\log(x_{i_i}+\cdots+x_{i_j}),$ for each set of indices
\begin{equation*}
1\leq i_1<\cdots<i_j\leq n+1, \text{\; for\; } j\in\{1,\dots,n\}.
\end{equation*}
To resolve these
singularities to be classically conormal, one would, in principle, need to
successively blow up all the strata of the boundary, starting with the
codimension $n$ parts and proceeding upwards to the codimension $2$ part.
Given the very explicit form that this singularity takes, such an approach
would only obscure its simple and rather benign structure. This general
approach is pursued in the papers~\cite{Jost1, Jost2, Jost3}. These authors
do not require the data $(f,g)$ to be continuous, but allow data with
complicated singularities along the boundary.

We remark that the approach proposed in Section~\ref{sec3} for
  solving the regular problem $\LK u=f$ (without boundary conditions)
  can be modified slightly to provide an algorithm for extending $g$ (defined on
  the boundary) to $\tg$ (in the interior) in such a way that $\LK\tg$
  is very easily computed.  We write $g=g_0+\cdots+g_{n-1}$, where
  $g_0=\sum_{j=1}^{n+1} g(e_j)x_j$ agrees with $g$ on $\Sigma_n^0$,
  and
\begin{equation*}
  g_k\restrictedto_{\Sigma_n^k} = g\restrictedto_{\Sigma_n^k} -
  \sum_{j=0}^{k-1}g_j\restrictedto_{\Sigma_n^k}, \qquad
  (1\le k\le n-1).
\end{equation*}
The right-hand side is zero on $\Sigma_n^{k-1}$, so this equation decouples
into independent homogeneous ``Dirichlet'' extension problems from the
connected components of $\Sigma_n^k\setminus\Sigma_n^{k-1}$ to $\Sigma_n$.

Using the eigenfunction expansion to represent the right-hand side on each face
of $\Sigma_n^k$ gives the desired representations
\begin{equation}
 g_k=\sum_{\cI} c_{\cI,\vec
  m} (w_{\cI}\psi_{\cI,\vec m})\text{ and }\LK g_k=\sum_{\cI} (\lambda_{\cI,\vec
  m}c_{\cI,\vec m})(w_{\cI}\psi_{\cI,\vec m}),
\end{equation}
 with both functions canonically
defined throughout $\Sigma_n.$ If $g$ is a polynomial of
degree less than or equal to $d$ on each face of $b\Sigma_n,$ then
$\tg\in\cP_d$ as well. Thus, we can replace $f$ by $f^{(1)}=f-\LK\tg$ in
(\ref{eqn3.32}) at the outset, thereby avoiding
non-homogeneous boundary conditions when working with the partition of unity.

Our approach to solving the Dirichlet problem works equally well if we add a
vector field $V$ that is everywhere tangent to $b\Sigma_n.$ In a projective
chart, such a vector field takes the form
\begin{equation}
  \tV=\sum_{j=1}^{n}b_j(x)x_j\pa_{x_j},
\end{equation}
with the additional requirement that
\begin{equation}
  \sum_{j=1}^{n}x_jb_j(x)\restrictedto_{x_1+\cdots+x_n=1}=0.
\end{equation}

A simple calculation shows that
\begin{equation}\label{eqn3.51}
  \tV\eta(x_{i_1}+\cdots+x_{i_j})=\sum_{l=1}^na_{l}(x)x_l\Delta_{\{i_1,\dots,i_j\}}(l)[\log(x_{i_1}+\cdots+x_{i_j})+1],
\end{equation}
where
\begin{equation}
  \Delta_{\{i_1,\dots,i_j\}}(l)=\begin{cases}1\text{ if }&l\in
    \{i_1,\dots,i_j\}\\
0\text{ if }&l\notin
    \{i_1,\dots,i_j\}.
\end{cases}
\end{equation}
The function on the right hand side of~\eqref{eqn3.51} is easily seen to be
continuous on the closed $n$-simplex. In fact these functions belong to
$\cC^{0,\gamma}_{\WF}(\Sigma_n),$ for any $0<\gamma<1.$ Applying $\tV$ to $\tv_1$ we see that
\begin{equation}\label{eqn3.53}
  \tV\tv_1=v_1 \tV\psi(x_1+\cdots+x_n)+\psi(x_1+\cdots+x_n)\tV v_1,
\end{equation}
from which it is clear that $\tV\tv_1$ is continuous on $\Sigma_n.$  To show
that $ \tV\tv_1\restrictedto_{b\Sigma_n}=0,$ it therefore suffices to prove it
in the interiors of the hypersurface boundary faces, but this is clear
from~\eqref{eqn3.51} and~\eqref{eqn3.53}. With these observations it follows
that $(\LK+\tV)\tv_1\in\cC^{0,\gamma}_{\WF}(\Sigma_n)$ and
\begin{equation}
  (\LK+\tV)\tv_1\restrictedto_{b\Sigma_n}=\LK\tv_1\restrictedto_{b\Sigma_n}=\fone\restrictedto_{b\Sigma_n}.
\end{equation}

Proceeding as above we easily demonstrate that, if $V$ is tangent to the boundary
$b\Sigma_n,$ then the Dirichlet problem:
\begin{equation}
  (\LK+V)u=f\text{ in }\Sigma_n\text{\; with\; }u\restrictedto_{b\Sigma_n}=g
\end{equation}
has a unique solution of the form $u=u_0+u_1,$ where $u_1$ is given by the
formula in~\eqref{eqn3.48}, and $u_0\in\dcC^{0,2+\gamma}_{\WF}(\Sigma_n).$


\begin{thebibliography}{10}

\bibitem{abramowitz}
{\sc M.~Abramowitz and I.~A. Stegun}, eds., {\em Handbook of Mathematical
  Functions with Formulas, Graphs, and Mathematical Tables}, Dover
  Publications, Inc., New York, 9th~ed., 1972.

\bibitem{BWS}
{\sc A.~Bhaskar, Y.~Wang, and Y.~Song}, {\em Efficient inference of population
  size histories and locus-specific mutation rates from large-sample genomic
  variation data}, Genome Research, 25 (2015), pp.~268--279.

\bibitem{Bollback:2008br}
{\sc J.~P. Bollback, T.~L. York, and R.~Nielsen}, {\em {Estimation of 2Nes From
  Temporal Allele Frequency Data}}, Genetics, 179 (2008), pp.~497--502.

\bibitem{costabel-dauge}
{\sc M.~Costabel and M.~Dauge}, {\em Construction of corner singularities for
  {A}gmon-{D}ouglis-{N}irenberg elliptic systems}, Math.~Nachr., 162 (1993),
  pp.~209--237.

\bibitem{DerThesis}
{\sc R.~Der}, {\em A {T}heory of {G}eneralised {P}opulation {P}rocesses},
  ProQuest LLC, Ann Arbor, MI, 2010.
\newblock Thesis (Ph.D.)--University of Pennsylvania.

\bibitem{WF1d}
{\sc C.~L. Epstein and R.~Mazzeo}, {\em {W}right-{F}isher diffusion in one
  dimension}, SIAM J. Math. Anal., 42 (2010), pp.~568--608.

\bibitem{EpMaz2}
\leavevmode\vrule height 2pt depth -1.6pt width 23pt, {\em Degenerate Diffusion
  Operators Arising in Population Biology}, vol.~185 of Annals of Mathematics
  Studies, Princeton University Press, Princeton, NJ, 2013.

\bibitem{EpMaz4}
\leavevmode\vrule height 2pt depth -1.6pt width 23pt, {\em Harnack inequalities
  and heat-kernel estimates for degenerate diffusion operators arising in
  population biology}, Applied Math. Res. Express,
doi:10.1093/amrx/abw002, (2016), 64 pp.
\newblock arXiv:1406.1426.

\bibitem{EpsteinPop} {\sc C.~L. Epstein and C.~Pop} \emph{Transition
    probabilities for degenerate diffusions arising in population genetics},
  arXiv:1608.02119 (2016) 49pp.

\bibitem{EvansSchraiberSlatkin}
{\sc S.~N. Evans, J.~G. Schraiber, and M.~Slatkin}, {\em Bayesian inference of
  natural selection from allele frequency time series}, preprint,  (2014),
  p.~35pp.

\bibitem{EvansSlatkinShvets}
{\sc S.~N. Evans, Y.~Shvets, and M.~Slatkin}, {\em Non-equilibrium theory of
  the allele frequency spectrum}, Theoretical Population Biology, 71 (2007),
  pp.~109--119.

\bibitem{Ewens}
{\sc W.~Ewens}, {\em Mathematical Population Genetics, I, 2nd edition}, vol.~27
  of Interdisciplinary Applied Mathematics, Springer Verlag, Berlin and New
  York, 2004.

\bibitem{Fisher1922}
{\sc R.~Fisher}, {\em On the dominance ratio}, Proc.~Roy.~Soc.~Edin., 42
  (1922), pp.~321--341.

\bibitem{gautschi:book}
{\sc W.~Gautschi}, {\em Orthogonal Polynomials, Computation and Approximation},
  Oxford University Press, Oxford, 2004.

\bibitem{golub:welsch}
{\sc G.~Golub and J.~Welsch}, {\em Calculation of {Gauss} quadrature rules},
  Math. Comput., 23 (1969), pp.~221--230.

\bibitem{griffiths1979}
  {\sc R. C. Griffiths}, {\em A transition density expansion for a multi-allele
    diffusion model}, Adv. in Appl. Probab., 11 (1979), pp. 310--325.

\bibitem{griffiths2010}
 {\sc R. C. Griffiths and D. Span\'o}, {\em Diffusion processes and coalescent
    trees}, Chapter 15, 358--375. In {\em Probability and Mathematical Genetics,
    Papers in Honour of Sir John Kingman.} LMS Lecture Note Series 378.
  Edited by N. H. Bingham and C. M. Goldie. Cambridge University Press. (2010)

\bibitem{griffiths2011}
 \leavevmode\vrule height 2pt depth -1.6pt width 23pt, {\em Multivariate {Jacobi} and {Laguerre}
    polynomials, infinite-dimensional extensions, and their probabilistic
    connections with multivariate {Hahn} and {Meixner} polynomials}, Bernoulli.,
  17 (2011), pp.~1095--1125. 
  
\bibitem{BustamanteEtAl}
{\sc R.~Gutenkunst, R.~Hernandez, S.~Williamson, and C.~Bustamante}, {\em
  Inferring the joint demographic history of multiple populations from
  multidimensional {SNP} frequency data}, PLoS Genetics, 5 (2009), p.~e1000695.

\bibitem{Jost1}
{\sc J.~Hofrichter, T.~D. Tran, and J.~Jost}, {\em A hierarchical extension
  scheme for backward solutions of the {W}right-{F}isher model}, preprint,
  (2014), p.~31pp.
\newblock arXiv:1406.5146.

\bibitem{Jost2}
\leavevmode\vrule height 2pt depth -1.6pt width 23pt, {\em A hierarchical
  extension scheme for solutions of the {W}right-{F}isher model}, preprint,
  (2014), p.~21pp.
\newblock arXiv:1406.5152.

\bibitem{Jost3}
\leavevmode\vrule height 2pt depth -1.6pt width 23pt, {\em The uniqueness of
  hierarchically extended backward solutions of the {W}right-{F}isher model},
(2014)
 to appear Communications in PDE. \newblock arXiv:1407.3067.

\bibitem{kalnins:91}
{\sc E.~G. Kalnins, W.~Miller, and M.~V. Tratnik}, {\em Families of orthogonal
  and biorthogonal polynomials on the n-sphere}, SIAM J. Math. Anal., 22
  (1991), pp.~272--294.

\bibitem{KTS}
{\sc J.~A. Kamm, J.~Terhorst, and Y.~Song}, {\em Efficient computation of the
  joint sample frequency spectra for multiple populations}, Journal of
  Computational and Graphical Statistics, to appear (2016), pp.~1--25.

\bibitem{karlin:64}
{\sc S.~Karlin and J.~McGregor}, {\em On some stochastic models in genetics},
  in Stochastic models in medicine and biology, J.~Gurland, ed., vol.~10,
  Academic Press, 1963, pp.~245--271.

\bibitem{Kimura1955_2}
{\sc M.~Kimura}, {\em Random genetic drift in multi-allelic locus}, Evolution, 9
  (1955), pp.~419--435.

\bibitem{Kimura1955_1}
\leavevmode\vrule height 2pt depth -1.6pt width 23pt, {\em Solution of a
  process of random genetic drift with a continuous model},
  Proc.~Natl.~Acad.~Sci., 41 (1955), pp.~144--150.

\bibitem{Kimura1955_3}
\leavevmode\vrule height 2pt depth -1.6pt width 23pt, {\em Stochastic processes
  and distribution of gene frequencies under natural selections}, Cold Spring
  Harbor on Quant.~Biol., 20 (1955), pp.~33--53.

\bibitem{kimura1956}
\leavevmode\vrule height 2pt depth -1.6pt width 23pt, {\em Random genetic drift
    in a tri-allelic locus; exact solution with a continuous model}, Biometrics,
  12 (1956), pp.~57--66.

\bibitem{Kimura1964}
\leavevmode\vrule height 2pt depth -1.6pt width 23pt, {\em Diffusion models in
  population genetics}, Journal of Applied Probability, 1 (1964), pp.~177--232.

\bibitem{koornwinder:75}
{\sc T.~Koornwinder}, {\em Two-variable analogues of the classical orthogonal
  polynomials}, in Theory and application of special functions, R.~Askey, ed.,
  vol.~35, Academic Press, 1975, pp.~435--495.

\bibitem{Lacerda:2014gi}
{\sc M.~Lacerda and C.~Seoighe}, {\em {Population Genetics Inference for
  Longitudinally-Sampled Mutants Under Strong Selection}}, Genetics, 198
(2014), pp.~1237--1250.

\bibitem{Li:1977}
{\sc W.-H. Li}, {\em Maintenance of genetic variability under mutation and selection
    pressures in a finite population}, Proc. Natl. Acad. Sci. USA, 74 (1977) 2509--2513.

\bibitem{littler1975-1} {\sc R. A. Littler}, {\em Loss of variability at one
    locus in a finite population}, Math.~Biosci., 25 (1975), pp.~151-163.
 
\bibitem{littler1975}
{\sc \bysame and E. D. Fackerell}, {\em Transition densities for
    neutral multi-allele diffusion models}, Biometrics, 31 (1975), pp.~117--123.

\bibitem{proriol:57}
{\sc J.~Proriol}, {\em Sur une famille de polynomes \'a deux variables
  orthogonaux dans un triangle}, C. R. Acad. Sci. Paris, 245 (1957),
  pp.~2459--2461.

\bibitem{Sato}
{\sc K.~Sato}, {\em Diffusion operators in population genetics and convergence
  of {M}arkov chains}, in Measure Theory Applications to Stochastic Analysis,
  G.~Kallianpur and D.~K\"olzow, eds., vol.~695 of Lecture Notes in
  Mathematics, Springer Berlin Heidelberg, 1978, pp.~127--137.

\bibitem{shimakura1}
{\sc N.~Shimakura}, {\em \'{E}quations diff\'erentielles provenant de la
  g\'en\'etique des populations}, T\^ohoku Math. J., 29 (1977), pp.~287--318.

\bibitem{shimakura2}
\leavevmode\vrule height 2pt depth -1.6pt width 23pt, {\em Formulas for
  diffusion approximations of some gene frequency models}, J. Math. Kyoto
  Univ., 21 (1981), pp.~19--45.

\bibitem{Song:2012cb}
{\sc Y.~S. Song and M.~Steinr\"ucken}, {\em {A Simple Method for Finding
  Explicit Analytic Transition Densities of Diffusion Processes with General
  Diploid Selection}}, Genetics, 190 (2012), pp.~1117--1129.

\bibitem{SBS}
{\sc M.~Steinr\"ucken, A.~Bhaskar, and Y.~S. Song}, {\em A novel spectral
  method for inferring general diploid selection from time series genetic
  data}, The Annals of Applied Statistics, 8 (2014), pp.~2203–--2222.

\bibitem{Steinrucken:9999ib}
{\sc M.~Steinr{\"u}cken, Y.~X.~R. Wang, and Y.~S. Song}, {\em {An explicit
  transition density expansion for a multi-allelic {W}right-{F}isher diffusion
  with general diploid selection}}, Theoretical Population Biology, 83 (2013),
  pp.~1--14.

\bibitem{szego}
{\sc G.~Szeg\H o}, {\em Orthogonal Polynomials}, American Mathematical Society,
  Providence, 1939.

\bibitem{wingate:98}
{\sc B.~Wingate and M.~A. Taylor}, {\em The natural function space for
  triangular and tetrahedral spectral elements}, Los Alamos National Laboratory
  Tech.~Rep., LA-UR-98-1711 (1998).

\bibitem{Wright1931}
{\sc S.~Wright}, {\em Evolution in {M}endelian populations}, Genetics, 16
  (1931), pp.~97--159.

\end{thebibliography}

\end{document}